\DeclareMathAlphabet{\mathpzc}{OT1}{pzc}{m}{it}
\newcommand{\setof}[1]{\left\{#1\right\}}
\renewcommand{\ww}{\omega^\omega}
\newcommand{\bG}{\boldsymbol{\Gamma}}
\newcommand{\bGd}{\check{\boldsymbol{\Gamma}}}
\newcommand{\ad}{\mathsf{AD}}
\newcommand{\concat}{{}^\smallfrown}
\renewcommand{\S}{\sectionsymbol}
\newcommand{\hr}{\frac{1}{2}\R}
\newcommand{\adr}{\ad_{\R}}
\newcommand{\adhr}{\ad_{\frac{1}{2}\R}}
\newcommand{\adp}{\ad^+}
\newcommand{\unif}{\text{Unif}}
\newcommand{\dom}{\text{dom}}
\newcommand{\res}{\restriction}
\newcommand{\ac}{\mathsf{AC}}
\newcommand{\hR}{\hr}
\newcommand{\lr}{L(\mathbb{R})}
\newcommand{\zf}{\mathsf{ZF}}
\newcommand{\zfc}{\mathsf{ZFC}}
\newcommand{\bP}{\boldsymbol{\Pi}}
\newcommand{\cC}{\mathcal{C}}
\newcommand{\inter}{\text{int}}
\newcommand{\dc}{\mathsf{DC}}
\newcommand{\bS}{\boldsymbol{\Sigma}}
\begin{document}

\authorlogan\authorlior\authorsteve

\title{Determinacy of Schmidt's Game and Other Intersection Games}

\begin{abstract}
Schmidt's game, and other similar intersection games have played an important
role in recent years in applications to number theory, dynamics, and
Diophantine approximation theory. These games are real games, that is, games
in which the players make moves from a complete separable metric space. The determinacy
of these games trivially follows from the axiom of determinacy for
real games, $\adr$, which is a much stronger axiom than that asserting  all
integer games are determined, $\ad$. One of our main results 
is a general theorem which under the hypothesis $\ad$ implies the
determinacy of intersection games which have a property allowing
strategies to be simplified. In particular, we show that
Schmidt's $(\alpha,\beta,\rho)$ game on $\R$ is determined from
 $\ad$ alone, but on $\R^n$ for $n \geq  3$ we show that $\ad$ 
does not imply the determinacy of this game. We also prove several other
results specifically related to the determinacy of Schmidt's game. These results highlight
the obstacles in obtaining the determinacy of Schmidt's game from $\ad$. 
\end{abstract}

\maketitle

\section{Introduction} \label{sec:introduction}

In 1966, Schmidt \cite{Schmidt1} introduced a two-player game referred to thereafter as
Schmidt's game. Schmidt invented the game primarily as a tool for
studying certain sets which arise in
number theory and Diophantine approximation theory. Schmidt's game, and other similar games,
have since become an important tool in number theory, dynamics and related areas.

Schmidt's game (defined precisely in Subsection \ref{Schmidtsgamedef}) and related games are real games, that is games in which each player plays
a ``real'' (an element of a {\em Polish space}: a completely metrizable and separable space). 
Questions regarding which player, if any, has a winning strategy in various games
have been systematically studied over the last century. Games in which one of the players has a winning strategy are said to be \emph{determined}.
The existence of winning strategies often have implications in both set theory and applications to other areas.  
In fact, the assumption that certain classes of games are determined can have far-reaching structural consequences.  
One such assumption is the axiom of determinacy, $\ad$, which is the statement that all integer games are determined.  
The axiom of determinacy for real games, $\adr$, would immediately imply the determinacy of Schmidt's game, but it is significantly stronger 
than $\ad$ (see Subsection \ref{ss:games} for a more thorough discussion).
A natural question is what form of determinacy axiom is necessary to obtain the determinacy of
Schmidt's game. In particular, can one obtain the determinacy of this game from
$\ad$, or does one need the full strength of $\adr$?

Consider the case of the Banach-Mazur game on a Polish space $(X,d)$ with target set $T \subseteq X$.
Here the players I and II at each turn $n$  play a real which codes
a closed ball $B(x_n,\rho_n)=\{ y\in X\colon d(x_n,y)\leq \rho_n\}$. The only ``rule''
of the game is that  the players must play a decreasing sequence of closed balls (that is,
the first player to violate this rule loses). If both player follows the rule,
then II wins iff $\bigcap_n B(x_n,\rho_n) \cap T \neq \emptyset$. Although this is a real game,
this game is determined for any $T \subseteq X$ just from $\ad$.
This follows from the easy fact that the Banach-Mazur game is equivalent to the integer game
in which both players play closed balls with ``rational centers'' (i.e., from
a fixed countable dense set) and rational radii.

For Schmidt's game on a Polish space $(X,d)$ with target set $T\subseteq X$, we have
in addition fixed parameters $\alpha,\beta\in (0,1)$. In this game I's first move is
a closed ball $B(x_0,\rho_0)$ as in the Banach-Mazur game. In subsequent moves, the
players play a decreasing sequence of closed balls as in the Banach-Mazur game, but with
a restriction of the radii. Namely, II must shrink the previous radius by a factor of $\alpha$,
and I must shrink the previous radius by $\beta$. So, at move ${2n}$, I plays a closed ball
of radius $\rho_{2n}=(\alpha \beta)^n \rho_0$, and at move ${2n+1}$, II plays a closed ball of radius
$\rho_{2n+1}=\alpha (\alpha \beta)^n \rho_0$. As with the Banach-Mazur game, if both players follow these rules,
then II wins iff $x \in T$ where $\{ x\}=\bigcap_n B(x_n,\rho_n)$.
We call this game the $(\alpha,\beta)$ Schmidt's game for $T$.
A variation of Schmidt's game, first introduced by Akhunzhanov in \cite{Akh},
has an additional rule that the initial radius $\rho_0=\rho$ of I's first move is fixed in advance. We call this the
$(\alpha,\beta,\rho)$ Schmidt's game for $T$. 
In all practical applications of the game
we are aware of, the difference between these two versions is immaterial. 
However, in general, these games are not literally 
equivalent, as the following simple example demonstrates.

\begin{example}
Consider $\mathbb{R}$ with the usual metric and let the target set for
II be $T=(-\infty, -1] \cup [1, \infty) \cup \mathbb{Q}$.  Notice that
  this set is dense.  It is easy to see that if $\rho \geq 2$ and
  $\alpha \leq \frac14$ then for any $\beta$, II wins the $(\alpha,
  \beta, \rho)$-game, simply by maximizing the distance from the
  center of her first move to the origin.  But if I is allowed to
  choose any starting radius and $\beta < \frac12$, then he is allowed
  to play, for instance, $(0, \frac12)$, and then on subsequent moves,
  simply avoid each rational one at a time, so that in fact I wins
  the $(\alpha, \beta)$-game.
\end{example}

In the case of Schmidt's game (either variation) it is not immediately clear
that the game is equivalent to an integer game, and thus it is not clear that $\ad$
suffices for the determinacy of these games. Our main results have implications regarding the determinacy
of Schmidt's game.

Another class of games which is similar in spirit to Schmidt's game are the so-called Banach games
whose determinacy has been investigated by Becker and Freiling \cite{Becker} \cite{Freiling} (with an important
result being obtained by Martin). Work of these authors has shown that the determinacy of these games
follows from (and is, in fact, equivalent to) $\ad$. Methods similar to those used by Becker, Freiling, and Martin
are instrumental in the proofs of our results as well.

In \S\ref{sec:background} we introduce notation and give some relevant background in the theory of games, descriptive set theory, and the
history of Schmidt's game in particular.

In \S\ref{sec:mr} we prove our main results, including those regarding the determinacy of Schmidt's game.
We prove general results, Theorems~\ref{hrthm}, \ref{detthm}, which give some conditions under which
certain real games are determined under $\ad$ alone. Roughly speaking, these results state that
``intersection'' games which admit strategies which are simple enough to be ``coded by a real,'' in a sense to
made precise, are determined from $\ad$. Schmidt's game, Banach-Mazur games, and other similar games
are intersection games. The simple strategy condition, however, depends on the specific game.
For Schmidt's $(\alpha,\beta,\rho)$ game on $\R$, we show the simple strategy condition is met,
and so this game is determined
from $\ad$. Moreover, for the
$(\alpha,\beta)$ Schmidt's game on $\R$, $\ad$ implies that either player I has a winning strategy
or else for every $\rho$, II has a winning strategy in the $(\alpha,\beta,\rho)$ game
(this does not immediately give a strategy for II in the $(\alpha,\beta)$ game from $\ad$,
as we are unable in the second case to choose, as a function of $\rho$,
a winning strategy for II in the $(\alpha,\beta,\rho)$ game).
For $\R^n$, $n \geq 2$, the simple strategy condition is not met. In fact, for $n \geq 3$
we show that the determinacy of Schmidt's $(\alpha,\beta,\rho)$ games does not follow from $\ad$.
For $n=2$, we do not know if
$\ad$ suffices to get the determinacy of Schmidt's game.

In \S\ref{sec:or} we prove two other results related to the determinacy of Schmidt's game in particular.
First, we show assuming $\ad$ that in any Polish space $(X,d)$, any $p \in (0,1)$, and any
$T \subseteq X$, there is at most one value of $(\alpha,\beta)\in (0,1)^2$ with $\alpha\beta=p$
such that the $(\alpha,\beta)$ Schmidt's game for $T$ is not determined. 
Second, we show assuming $\ad$ that for a general Polish space $(X,d)$ and any target set $T\subseteq X$,
the ``non-tangent'' version of Schmidt's $(\alpha,\beta,\rho)$
game is determined. This game is just like Schmidt's game except we require
each player to play a ``non-tangent ball,'' that is, $d(x_n,x_{n+1}) < \rho_n-\rho_{n+1}$.  These results help to illuminate the 
obstacles in analyzing the determinacy of Schmidt's game.

Finally in \S\ref{sec:questions} we list several open questions which are left unanswered by
our results. We feel that the results and questions of the current paper show an interesting
interplay between determinacy axioms and the combinatorics of Schmidt's game.

\section{Background}   \label{sec:background}

In this section we fix the notation we use to describe the games we will be considering,
both for general games and specifically for Schmidt's game. We recall some facts about the forms of determinacy we will
be considering, some necessary background in descriptive set theory to state and prove our theorems, and we explain some of the history and significance of Schmidt's game. 

Throughout we let $\omega=\N=\{ 0,1,2,\dots\}$ denote the set of natural numbers.
We let $\R$ denote the set of real numbers (here we mean the elements of the standard real
line, not the Baire space $\omega^\omega$ as is frequently customary in
descriptive set theory).

\subsection{Games} \label{ss:games}

Let $X$ be a non-empty set. Let $X^{<\omega}$ and $X^\omega$ denote respectively the set of
finite and infinite sequences from $X$. For $s \in X^{<\omega}$ we let
$|s|$ denote the length of $s$. If $s,t \in X^{<\omega}$ we write
$s \leq t$ if $s$ is an initial segment of $t$, that is, $t\res |s|=s$. 
If $s,t \in X^{<\omega}$, we let $s \concat t$ denote the concatenation of $s$ and $t$.

We call $R\subseteq X^{<\omega}$ a {\em tree on $X$} if it is closed under initial segments,
that is, if $t \in R$ and $s\leq t$, then $s \in R$. We can view $R$ as the set of
{\em rules} for a game. That is, each played must move at each turn so that the
finite sequence produced stays in $R$ (the first player to violate this ``rule''
loses the game). If $\vec{x}=(x_0,x_1,\dots) \in X^\omega$, we say $\vec{x}$
has followed the rules if $\vec{x}\res n \in R$ for all $n$. We let $[R]$
denote the set of all $\vec x \in X^{\omega}$ such that $\vec x \res n \in R$ for all $n$
(i.e., $\vec x$ has followed the rules). We also refer to $[R$] as the set of {\em branches}
through $R$.
We likewise say
$s \in X^{<\omega}$ has followed the rules just to mean $s \in R$.

Fix a set $B \subseteq X^\omega$, which we call the {\em target set},
and let $R \subseteq X^{<\omega}$ be a rule set (i.e., a tree  on $X$). 
The game $G(B,R)$ on the set $X$  is defined as follows. I and II alternate
playing elements $x_i \in X$. So, I plays $x_0,x_2,\dots$, while II
plays $x_1,x_3,\dots$. This produces the {\em run} of the game
$\vec x=(x_0,x_1,\dots)$. The first player, if any, to violate the rules $R$
loses the run $\vec x$ of the game. If both players follow the rules
(i.e., $\vec x\in [R]$), then we declare I to have won the run iff $\vec x \in B$
(otherwise we say II has won the run). 
Oftentimes,  in defining a game the set of rules $R$ is defined implicitly
by giving  requirements on each players' moves.
If there are no rules, i.e., $R=X^{<\omega}$, then we write
$G(B)$ for $G(B,R)$. 
Also, it is frequently convenient
to define the game by describing the payoff set for II instead of I.
This, of course, is formally just replacing $B$ with $X^\omega-B$.

A {\em strategy} for I in a game on the set $X$ is a function
$\sigma \colon \bigcup_{n\in \omega} X^{2n} \to X$. A strategy for II
is a function $\tau \colon \bigcup_{n \in \omega} X^{2n+1}\to X$.
We say $\sigma$ follows the rule set $R$ is whenever $s \in R$ of
even length, than $s \concat \sigma(s)\in R$. We likewise define
the notion of a strategy $\tau$ for II to follow the rules.
We say $\vec x \in X^{\omega}$ follows
the strategy $\sigma$ for I if for all $n \in \omega$,
$x_{2n}=\sigma (\vec x\res 2n)$, and similarly define
the notion of $\vec x$ following the strategy $\tau$ for
II. We also extend this terminology in the obvious way
to say an $s \in X^{<\omega}$ has followed $\sigma$ (or $\tau$).
Finally, we say a strategy $\sigma$ for I is a {\em winning strategy}
for I in the game $G(B,R)$ if $\sigma$ follows the rules $R$
and for all $\vec x\in [R]$ which follows $\sigma$ we have $\vec x\in B$,
that is, player I has won the run $\vec x$. We likewise define the notion
of $\tau$ being a winning strategy for II.

If $\sigma$ is a strategy for I, and $\vec z=(x_1,x_3,\dots)$ is a sequence of moves
for II, we write $\sigma* \vec{z} $ to denote the corresponding
run $(x_0,x_1,x_2,x_3,\dots)$ where $x_{2n}=\sigma( x\res 2n)$.
We likewise define $\tau * \vec z$ for $\tau$ a strategy for II and
$\vec z=(x_0,x_2,\dots)$ a sequence of moves for I. If $\sigma, \tau$
are strategies for I and II respectively, then we let $\sigma*\tau$
denote the run $(x_0,x_1,\dots)$ where $x_{2n}=\sigma( x\res 2n)$
and $x_{2n+1}=\tau( x\res 2n+1)$ for all $n$.

We say the game $G(B,R)$ on $X$ is {\em determined} if one of the players
has a winning strategy. The {\em axiom of determinacy} for games on
$X$, denoted $\ad_X$ is the assertion that all games on the set $X$
are determined. Axioms of this kind were first introduced by Mycielski 
and Steinhaus. We let $\ad$ denote $\ad_\omega$, that is, the assertion
all two-player integer games are determined. Also important for the current paper
is the axiom $\adr$, the assertion that all real games are determined. 
Both $\ad$ and $\adr$ play an important role in modern descriptive set theory.
although both axioms contradict the axiom of choice, $\ac$, and thus are not
adopted as axioms for the true universe $V$ of set theory, they
play a critical role in developing the theory of natural models such as
$\lr$ containing ``definable'' sets of reals.  It is known that $\adr$
is a much stronger assertion than $\ad$ (see Theorem 4.4 of \cite{solovay}).

Sitting between $\ad$ and $\adr$ is the determinacy of another class of games called
{\em $\hR$} games, in which one of the players plays reals and the other
plays integers. The proof of one of our theorems will require the use of
$\hR$ games. The axiom $\adhr$ that all $\hR$ games are determined is known
to be equivalent to $\adr$ ($\adhr$ immediately implies $\unif$, see Theorem~\ref{martinwoodin} below). However, $\ad$ suffices to obtain the determinacy
of $\hR$ games with Suslin, co-Suslin payoff (a result of Woodin, see \cite{kechrishr}).
We define these terms more precisely in \S\ref{sec:mr}. As in \cite{Becker},
this fact will play an important role in one of our theorems.

One of the central result in the theory of games is the result of Martin \cite{Martin_determinacy}
that all Borel games on any set $X$ are determined in $\zfc$.
By ``Borel'' here we are referring to the topology on $X^\omega$ given by the
product of the discrete topologies on $X$. 
In fact, in just $\zf$
we have that all Borel games (on any set $X$) are {\em quasi-determined}
(see \cite{Moschovakis} for the definition of quasi-strategy and proof of the extension of Martin's  result
to quasi-strategies in $\zf$, which is due to Hurkens  and Neeman).

\begin{theorem}[Martin, Hurkens and Neeman for quasi-strategies]
\label{theoremboreldeterminacy}
Let $X$ be a nonempty set, and let $B\subseteq X^\omega$ be a Borel set, and $R\subseteq X^{<\omega}$
a rule set $R$ (a tree). 
Then the game $G(B,R)$ is determined (assuming $\zfc$, or quasi-determined just assuming $\zf$).
\end{theorem}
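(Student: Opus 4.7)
The plan is to follow Martin's original approach, reducing Borel determinacy to closed game determinacy by means of \emph{unraveling}. The starting point is the observation that closed (respectively open) games $G(B,R)$ on an arbitrary set $X$ are quasi-determined by a Gale--Stewart style argument: if $B$ is closed and I has no winning quasi-strategy, then the set of positions in $R$ from which I still has no winning quasi-strategy defines a non-losing quasi-strategy for II, and the closedness of $B$ guarantees every run consistent with it lies in $X^\omega \setminus B$. In $\zfc$ one then uses choice to extract an actual strategy; in $\zf$ alone one keeps the quasi-strategy, which is what Hurkens and Neeman do throughout.

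Next I would define a \emph{covering} of $G(B,R)$ to be a triple $(\tilde X, \tilde R, \pi)$ where $\tilde R$ is a tree on a larger set $\tilde X$ and $\pi:[\tilde R]\to[R]$ is a continuous projection equipped with a lifting operation sending (quasi-)strategies in the covering game to (quasi-)strategies in $G(B,R)$ in a way that preserves wins on preimages of target sets. The central lemma, proved by transfinite recursion on the Borel rank of $B$, asserts that for every Borel $B\subseteq X^\omega$ there is a covering $(\tilde X,\tilde R,\pi)$ with $\pi^{-1}(B)$ clopen in $[\tilde R]$. Since clopen games are quasi-determined by the base case, and (quasi-)determinacy transfers downward along a covering by applying the lifting operation, this yields the theorem.

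The successor step of the induction is the explicit \emph{one-step unraveling}: given a covering that has already reduced some closed set $C$ to a clopen set upstairs, one replaces a single move with a richer auxiliary move in which a player commits to declaring whether the remainder of the run will enter $C$, reducing a new Borel combination of $C$ with opens and closeds to clopen form. The hard part, and the main obstacle, is the \textbf{limit step}: to unravel a countable union of Borel sets $\bigcup_n B_n$ one has a separate covering $(\tilde X_n,\tilde R_n,\pi_n)$ for each $B_n$, and these must be amalgamated into one covering that simultaneously clopenizes every $B_n$. Martin handles this by an inverse-limit fusion construction in which moves at stage $n$ carry ever richer auxiliary data coordinating all prior coverings, and the lifting property must be shown to survive the limit; this bookkeeping is the technical core of the proof. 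Throughout, the fact that $X$ is an arbitrary set (not just $\omega$) causes no additional difficulty because the unraveling operations are purely tree-theoretic, and for the $\zf$ refinement one only needs to verify that the covering trees and projections are defined without any appeal to $\ac$ and that the quasi-strategy version of the base case suffices to propagate through both the successor and limit stages.
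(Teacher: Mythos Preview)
The paper does not actually prove this theorem; it is stated as background with a citation to Martin's original result and to Moschovakis for the Hurkens--Neeman quasi-strategy refinement. There is no ``paper's own proof'' to compare against.

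That said, your outline is a faithful sketch of Martin's unraveling argument, which is exactly the proof the paper is alluding to. The structure---Gale--Stewart for the closed base case, the notion of a covering that lifts (quasi-)strategies, one-step unraveling of a closed set to clopen, and the inverse-limit fusion at limit stages---is correct, and your remark that the $\zf$/quasi-strategy version requires only checking that no choice is invoked in building the coverings is the right diagnosis of the Hurkens--Neeman contribution. One small point: in your description of the successor step you say the covering ``has already reduced some closed set $C$ to a clopen set upstairs'' and then the new unraveling handles ``a new Borel combination of $C$ with opens and closeds.'' This slightly understates what happens: the induction is organized so that at each stage one unravels a single closed (equivalently, open) set in the current covering game, and the point is that a $\boldsymbol{\Sigma}^0_{\alpha+1}$ set pulls back to a countable union of closed sets upstairs once the lower-rank pieces have been unraveled, so one then iterates the one-step unraveling countably many times and fuses. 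Your sketch conflates this with the limit case a bit, but the essential ideas are all present.
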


As we mentioned above, $\ad$ contradicts $\ac$. In fact, games played for particular types of
``pathological'' sets constructed using $\ac$ are frequently not determined.
For example, the following result is well-known (e.g. \cite[p. 137, paragraph 8]{Kechris}):

\begin{proposition}
\label{propositiongalestewartundetermined}
Let $B \subseteq \omega^\omega$ be a Bernstein set (i.e., neither the set nor its complement
contains a perfect set). Then the game $G(B)$ is not determined.
\end{proposition}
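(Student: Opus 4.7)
The plan is to show directly that neither player can have a winning strategy, by producing, from any purported winning strategy, a perfect set contained in one side of the Bernstein partition.

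First I would suppose toward a contradiction that $\sigma$ is a winning strategy for player I in $G(B)$. Consider the set
\[
A_\sigma = \{\sigma * \vec z \suchthat \vec z \in \omega^\omega\} \subseteq \omega^\omega,
\]
of all runs consistent with $\sigma$ (where $\vec z = (x_1, x_3, \dots)$ ranges over II's possible responses). The map $\vec z \mapsto \sigma * \vec z$ is a continuous injection of $\omega^\omega$ into itself, and its inverse (on the image) is continuous as well, being given by the projection to the odd coordinates. Thus $A_\sigma$ is homeomorphic to $\omega^\omega$, hence is an uncountable perfect subset of $\omega^\omega$. Since $\sigma$ is winning for I, every run in $A_\sigma$ lies in $B$, so $A_\sigma \subseteq B$ is a perfect set inside $B$, contradicting the Bernstein property.

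Next I would run the symmetric argument for II: if $\tau$ is a winning strategy for II, then
\[
A_\tau = \{\tau * \vec z \suchthat \vec z \in \omega^\omega\},
\]
where now $\vec z = (x_0, x_2, \dots)$ ranges over I's possible moves, is likewise a perfect subset of $\omega^\omega$ (by the same homeomorphism argument), and is contained in $\omega^\omega \setminus B$, again contradicting the Bernstein property. Thus neither player has a winning strategy, so $G(B)$ is not determined.

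The only real content is the observation that the set of plays consistent with a given strategy is homeomorphic to $\omega^\omega$, and hence is perfect; I do not anticipate any obstacle, since this follows immediately from the fact that a strategy is a function and thus the opponent's moves uniquely determine the run. The rest is just the defining property of a Bernstein set.
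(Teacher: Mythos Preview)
Your argument is correct and is the standard one; the paper itself does not supply a proof but simply cites Kechris, so there is nothing to compare against beyond noting that your approach is exactly the classical argument one finds in that reference. One small point worth making explicit: to conclude that $A_\sigma$ is a \emph{perfect} (i.e., closed, without isolated points) subset of $\omega^\omega$, you need closedness, and ``homeomorphic to $\omega^\omega$'' alone does not give that; but here $A_\sigma = \{y : y(2n) = \sigma(y\res 2n) \text{ for all } n\}$ is visibly closed, so the conclusion stands.
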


\subsection{Determinacy and Pointclasses} \label{detpc}

We briefly review some of the terminology and results related to the
determinacy of games and some associated notions concerning pointclasses which
we will need for the proofs of some of our results.

We have introduced above the axioms $\ad$, $\adhr$, and $\adr$ which assert the determinacy
of integer games, half-real games, and real games respectively. We trivially have
$\adr \Rightarrow \adhr \Rightarrow \ad$. All three of these axioms contradict $\ac$,
the axiom of choice. They are consistent, however, with $\dc$, the axiom of
dependent choice, which asserts that if $T$ is a non-empty {\em pruned} tree
(i.e., if $(x_0,\dots,x_n)\in T$ then $\exists x_{n+1}\ (x_0,\dots,x_n,x_{n+1})\in T$)
then there is a branch $f$ through $T$ (i.e., $\forall n\ (f(0),\dots,f(n))\in T$).
$\dc$ is a slight strengthening of the axiom of countable choice. On the one hand,
$\dc$ holds in the minimal model $\lr$ of $\ad$, while on the other hand even
$\adr$ does not imply $\dc$. Throughout this paper, our background theory is $\zf+\dc$.

The axiom $\adr$ is strictly stronger than $\ad$ (see \cite{solovay}), and in fact
it is known that $\adr$ is equivalent to $\ad+\unif$, where $\unif$
is the axiom that every $R\subseteq \R\times \R$ has a {\em uniformization},
that is, a function $f \colon\dom(R)\to \R$ such that $(x,f(x))\in R$
for all $x \in \dom(R)$. This equivalence will be important for
our argument in Theorem~\ref{thm:r3} that $\ad$ does not suffice for the
determinacy of Schmidt's game in $\R^n$ for $n \geq 3$.
The notion of uniformization is closely connected with the descriptive set
theoretic notion of a {\em scale}. If a set $R\subseteq X\times Y$ (where
$X$, $Y$ are Polish spaces) has a scale, then it has a uniformization. The only property of scales which we use is the existence of uniformizations, so we will not give the definition, which is rather technical, here.

A (boldface) {\em pointclass} $\bG$ is a collection
of subsets of Polish spaces closed under continuous preimages, that is, if
$f \colon X\to Y$ is continuous and $A\subseteq Y$ is in $\bG$, then
$f^{-1}(A)$ is also in $\bG$. We say $\bG$ is selfdual if $\bG=\bGd$ where
$\bGd=\{ X-A\colon A\in \bG\}$ is the dual pointclass of $\bG$. We say
$\bG$ is non-selfdual if $\bG\neq \bGd$. A set $U \subseteq \ww \times X$
is {\em universal} for the $\bG$ subsets of $X$ if $U\in \bG$ and for every
$A\subseteq X$ with $A\in \bG$ there is an $x \in \ww$ with $A=U_x=\{ y\colon (x,y)\in U\}$.
It is a consequence of $\ad$ that every non-selfdual pointclass has a universal set.

For $\kappa$ an ordinal number we say a set
$A \subseteq \ww$ is $\kappa$-Suslin if there is a tree $T$ on $\omega \times \kappa$
such that $A=p[T]$, where $p[T]=\{ x \in \ww \colon \exists f \in \kappa^\omega\
(x,f)\in [T]\}$ denotes the projection of the body of the tree $T$. We say $A$
is Suslin if it is $\kappa$-Suslin for some $\kappa$. We say $A$ is
co-Suslin if $\ww \setminus A$ is Suslin. For a general Polish space $X$,
we say $A \subseteq X$ is Suslin if for some continuous surjection
$\varphi \colon \ww \to X$ we have that $\varphi^{-1}(A)$ is Suslin
(this does not depend on the choice of $\varphi$). Scales are essentially
the same thing as Suslin representations, in particular a set $A\subseteq Y$
is Suslin iff it has a scale, thus relations which are Suslin have uniformizations.
If $\bG$ is a pointclass, then we say a set $A$ is {\em projective over} $\bG$
if it is in the smallest pointclass $\bG'$ containing $\bG$ and closed under
complements and existential and universal quantification over $\R$.
Assuming $\ad$, if $\bG$ is contained in the class of Suslin, co-Suslin sets, then every set projective over
$\bG$ is also Suslin and co-Suslin.  For this result, 
more background
on these general concepts, as well as the precise definitions of scale and the scale property,
the reader can refer to \cite{Moschovakis}.

Results of Martin and Woodin (see \cite{MartinWoodin} and \cite{Martin_ctb})
show that assuming $\ad+\dc$, the axioms
$\adr$, $\unif$, and scales are all equivalent. More precisely we have the following.

\begin{theorem} [Martin, Woodin] \label{martinwoodin}
Assume $\zf+\ad+\dc$. Then the following are equivalent:
\begin{enumerate}
\item
$\adr$
\item
$\unif$
\item
Every $A\subseteq \R$ has a scale.
\end{enumerate}
\end{theorem}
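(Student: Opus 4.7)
My plan is to establish the equivalences by proving the cycle $(1) \Rightarrow (2) \Rightarrow (3) \Rightarrow (1)$. The direction $(1) \Rightarrow (2)$ is the short one: given $R \subseteq \R \times \R$, consider the real game in which I plays $x \in \R$ as a single move, II plays $y \in \R$ as a single move, and II wins iff $x \notin \dom(R)$ or $(x,y) \in R$. By $\adr$ this game is determined, and I clearly cannot have a winning strategy, since any such strategy reduces to a fixed first move $x_0 = \sigma(\emptyset) \in \dom(R)$ which II defeats by playing any $y$ with $(x_0, y) \in R$. Hence II has a winning strategy $\tau$, and $f(x) := \tau(x)$ restricted to $\dom(R)$ is the desired uniformization.

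For $(2) \Rightarrow (3)$, which is the heart of the theorem, I would invoke Woodin's scale construction. For a given $A \subseteq \R$ one builds a tree $T$ on $\omega \times \kappa$ with $p[T] = A$ by using $\unif$ to pick coherently, across all $x \in A$, ordinal ``witnesses'' that $x \in A$; these witnesses assemble into norms on $A$ which are then shown to form a scale. The construction proceeds inductively along the Wadge hierarchy, with $\dc$ invoked to handle the $\omega$-sequences of approximations. This is where the full strength of $\ad + \unif + \dc$ is used.

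For $(3) \Rightarrow (1)$, suppose every set of reals has a scale, so every set of reals is both Suslin and co-Suslin. In particular every $\hR$ game has Suslin, co-Suslin payoff, so the Woodin theorem recalled in Subsection~\ref{detpc} yields its determinacy, and hence $\adhr$. To lift $\adhr$ to $\adr$: given a real game $G(A)$ with $A = p[S]$ and $\R^\omega \setminus A = p[T]$, consider the auxiliary $\hR$-style game in which one player continues to play reals while the other additionally plays integer codes of ordinal witnesses through $S$ or $T$. The payoff becomes essentially closed and the auxiliary game is determined; winning strategies then descend to winning strategies in $G(A)$ via the Suslin representations, invoking also $(2)$ (which follows from $(3)$ by Moschovakis's classical theorem that scales yield uniformizations).

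The main obstacle is unquestionably step $(2) \Rightarrow (3)$: extracting scales from uniformization requires a substantial inductive construction over the Wadge hierarchy and is the deepest piece of descriptive set-theoretic machinery in the theorem. The other two directions are comparatively routine given the $\hR$ determinacy result for Suslin, co-Suslin payoff and the standard Moschovakis equivalence between scales and Suslin representations (together with its uniformization corollary).
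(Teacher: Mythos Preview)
The paper does not prove this theorem: it is stated with attribution to Martin and Woodin and citations to \cite{MartinWoodin} and \cite{Martin_ctb}, with no proof given. So there is no ``paper's own proof'' to compare against; I can only comment on the soundness of your sketch.

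Your $(1)\Rightarrow(2)$ is the standard argument and is correct. Your $(2)\Rightarrow(3)$ is, as you say, the deep step; note that the paper (and the standard literature) attributes this direction primarily to Martin rather than Woodin. You are right that this is where the real work lies and that a genuine proof requires the inductive scale analysis along the Wadge hierarchy.

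Your $(3)\Rightarrow(1)$ has a gap worth flagging. Getting $\adhr$ from Woodin's result on Suslin, co-Suslin $\hR$ games is fine, but your lift from $\adhr$ to $\adr$ is not quite right as written. The ``auxiliary $\hR$-style game'' you describe, in which one side plays ``integer codes of ordinal witnesses'' through the Suslin trees, is not actually a $\hR$ game: the witnesses live in some $\kappa^\omega$ with $\kappa$ typically uncountable, and under $\ad$ there is no injection of such ordinals into $\omega$ (indeed no injection of $\R$ into the ordinals). The relevant black box is rather the determinacy under $\ad$ of \emph{ordinal} games with Suslin, co-Suslin payoff (Moschovakis, Steel---also mentioned in the paper), and even granting that, the passage from real games to ordinal games is not a one-line reduction. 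So your cycle closes only modulo a further substantial citation, and the sketch as written misidentifies which determinacy theorem is doing the work at that step.
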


Scales and Suslin representations are also important as it follows from $\ad$
that ordinal games where the payoff set is Susin and co-Suslin (the notion of
Suslin extends naturally to sets $A \subseteq \lambda^\omega$ for $\lambda$ an ordinal number)
are determined (one proof of this is due to Moschovakis, Theorem~2.2 of \cite{Moschovakis_od}, another
due to Steel can be found in the proof of Theorem~2 of
\cite{Steel}). We will not need this result for the current paper.

A strengthening of $\ad$, due to Woodin, is the axiom $\adp$. This axiom has been very useful
as it allows the development of a structural theory which has been used to obtain a number of
results. It is not currently known if $\adp$ is strictly stronger than $\ad$, but
it holds in all the natural models of $\ad$ obtained from large cardinal axioms
(it holds, in particular, in the model $\lr$, so $\adp$ is strictly weaker that $\adr$).
In our Theorem~\ref{thm:r3} we in fact show that $\adp$ does not suffice
to get the determinacy of Schmidt's $(\alpha,\beta,\rho)$ game in $\R^n$ for $n \geq 3$.

\subsection{Schmidt's game} \label{Schmidtsgamedef}

As mentioned in the introduction, Schmidt 
invented the game primarily as a tool for
studying certain sets which arise in
number theory and Diophantine approximation theory. These sets are
often exceptional with respect to both measure and category, i.e., Lebesgue null and meager.
One of the the most significant examples is the following.
Let $\mathbb{Q}$ denote the set of rational numbers. A real number $x$ is said
to be \emph{badly approximable} if there exists a positive constant
$c=c(\alpha )$ such that $\left|x-\frac{p}{q}\right|>\frac{c}{q^2}$ for
all $\frac{p}{q}\in \mathbb{Q}$.
We denote the set of badly approximable numbers by{ \bf{BA}}.
This set plays a major role in Diophantine approximation theory, and is well
known to be both Lebesgue null and meager.
Nonetheless, using his game, Schmidt was
able to prove the following remarkable result:

\begin{theorem}[Schmidt \cite{Schmidt1}]
Let $(f_n)_{n=1}^{\infty}$  be a sequence of $\CC^1$ diffeomorphisms of $\R$. Then the Hausdorff dimension of the set $\bigcap_{n=1}^{\infty}f^{-1}_n({\bf{BA}})$ is $1$. In particular, $\bigcap_{n=1}^{\infty}f^{-1}_n({\bf{BA}})$ is uncountable.
\end{theorem}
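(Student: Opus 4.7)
The plan is to deduce the theorem from the now-standard machinery of Schmidt's game, by showing that $S := \bigcap_{n=1}^{\infty} f_n^{-1}(\mathbf{BA})$ is \emph{$\alpha$-winning} in $\mathbb{R}$ for some fixed $\alpha \in (0,\tfrac{1}{2}]$, meaning II has a winning strategy in the $(\alpha,\beta)$ Schmidt's game with target $S$ for every $\beta \in (0,1)$. Once this is established, full Hausdorff dimension follows from the standard branching construction: by having II split each of her winning moves into $N$ disjoint compatible sub-branches (which is feasible when $N\alpha\beta$ is small enough relative to $1$), one obtains a Cantor subset of $S$ of Hausdorff dimension $\log(N)/\log(1/(\alpha\beta))$, and letting $N \to \infty$ with $\beta \to 1$ pushes this to $1$.

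To show that $S$ is $\alpha$-winning, I would establish three building blocks. First, \emph{$\mathbf{BA}$ itself is $\alpha$-winning on $\mathbb{R}$ for every $\alpha \leq \tfrac{1}{2}$}: fix a small $c=c(\alpha,\rho_0)$ and have II aim to produce $x$ with $|x-p/q|>c/q^{2}$ for every $p/q \in \mathbb{Q}$; at her turn with current ball of radius $r$, only rationals whose denominators lie in a bounded window satisfy $c/q^{2}\gtrsim r$, and the separation $|p/q-p'/q'|\geq 1/(qq')$ shows that at most one such rational can be close to the current ball, so a sub-ball of radius $\alpha r$ displaced to avoid a $c/q^{2}$-neighborhood of that rational fits because $\alpha \leq \tfrac{1}{2}$. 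Second, \emph{the class of $\alpha$-winning sets is closed under countable intersections}: given winning strategies $\tau_{k}$ for each $S_{k}$, II interleaves them by reserving her $k$-th block of moves to $\tau_{k}$, treating the intervening moves as a legal adversary in an $(\alpha,\beta')$-subgame governed by $\tau_{k}$ for an effective $\beta'$; this is permitted precisely because $\tau_{k}$ must win for \emph{every} $\beta$. Third, \emph{the preimage of an $\alpha$-winning set under a $\mathcal{C}^{1}$ diffeomorphism $f$ is $\alpha'$-winning} for some $\alpha' > 0$: since $f$ is $\mathcal{C}^{1}$ with nonvanishing derivative, on a sufficiently small neighborhood it has distortion arbitrarily close to $1$, and closed balls map $K$-comparably to closed balls, so after finitely many initial moves to localize into such a neighborhood one transplants a winning strategy for $\mathbf{BA}$ through $f$ with only a controlled loss in the $\alpha$-parameter.

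The main obstacle is making these last two ingredients cooperate: one needs a single $\alpha > 0$ that works uniformly across the whole family $\{f_{n}^{-1}(\mathbf{BA})\}_{n\geq 1}$, whereas a priori the $\alpha'$ produced by the pullback step depends on the neighborhood on which the game is localized and hence on $n$. The resolution is to fold the localization into the interleaving: in the $k$-th block of the countable-intersection construction, II spends finitely many extra moves shrinking her current ball into a neighborhood on which $f_{k}$ has distortion close to $1$, and only then runs the transplanted winning strategy for $\mathbf{BA}$. Since each $f_{k}$ is continuously differentiable with nonvanishing derivative, such a neighborhood exists around any point already in II's ball, and an induction on $k$ produces a single combined strategy operating with a fixed $\alpha$ independent of $k$; this strategy wins the $(\alpha,\beta)$-game into $S$ for every $\beta$, and the dimension argument of the first paragraph then delivers $\dim_{H}(S)=1$.
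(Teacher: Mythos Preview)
The paper does not prove this theorem; it merely states it as motivational background and attributes it to Schmidt's original 1966 paper \cite{Schmidt1}. Your outline is essentially Schmidt's own argument: establish that $\mathbf{BA}$ is $\alpha$-winning for small $\alpha$, invoke the countable-intersection stability of $\alpha$-winning sets, handle $\mathcal{C}^{1}$ diffeomorphisms by local bi-Lipschitz control folded into the interleaving, and extract full Hausdorff dimension by the branching Cantor-set construction. So your approach matches the cited source, and the sketch is sound.

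One point worth tightening if you write this out in full: in the transplant step you say the pullback incurs ``only a controlled loss in the $\alpha$-parameter,'' but in Schmidt's game the radius multiplier is rigid---II must play radius exactly $\alpha r$, not approximately. The clean way to make your localization work is to exploit that $\mathbf{BA}$ is $\alpha'$-winning for a whole interval of $\alpha'$ (all $\alpha' \le \tfrac{1}{2}$), so after shrinking until $f_k$ has distortion within $1+\varepsilon$, II can run the $\alpha'$-strategy for $\mathbf{BA}$ in the image with $\alpha'$ chosen slightly larger than $\alpha$ so that the pulled-back response still has radius at most $\alpha r$ and hence fits legally. Your text hints at this but does not make the mechanism explicit; once stated, the uniformity over $k$ that you claim follows immediately.
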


Yet another example of the strength of the game is the following.
Let $b\geq 2$ be an integer. A real number $x$ is said to be normal to base $b$ if, for every
$n\in\mathbb{N}$, every block of $n$ digits from $\{0, 1,\dots , b-1\}$ occurs in the base-$b$ expansion of
$x$ with asymptotic frequency $1/b^n$. It is readily seen that the set of numbers normal to no base is both Lebesgue null and meager. Nevertheless, Schmidt used his game to prove:

\begin{theorem}[Schmidt \cite{Schmidt1}]
The Hausdorff dimension of the set of numbers normal to no base is $1$.\\
\end{theorem}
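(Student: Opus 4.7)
The plan is to deduce the dimension bound from two central properties of Schmidt's game established in \cite{Schmidt1}. Call a set $T\subseteq \R$ \emph{$\alpha$-winning} if II has a winning strategy in the $(\alpha,\beta)$ game for $T$ for every $\beta\in(0,1)$. Then (i) every $\alpha$-winning subset of $\R$ has Hausdorff dimension $1$, and (ii) for each fixed $\alpha\in(0,1)$, the class of $\alpha$-winning sets is closed under countable intersections. Writing $N_b\subseteq \R$ for the set of reals not normal to base $b$, the set of numbers normal to no base is $N=\bigcap_{b\ge 2} N_b$; so by (ii) and (i) it suffices to produce a single $\alpha_0\in(0,1)$ such that $N_b$ is $\alpha_0$-winning for every $b\ge 2$.

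The engine is a strategy that forces base-$b$ digit blocks. Fix $b\ge 2$ and any $\beta\in(0,1)$, and set $k=\lfloor\log_b(1/(2\alpha_0))\rfloor$. After I plays $B(x_{2n},\rho_{2n})$, the first $K_n\approx\log_b(1/\rho_{2n})$ base-$b$ digits of every point in that ball already agree, and so there is a digit-aligned subinterval of length $b^{-(K_n+k)}$ on which every point has its digits at positions $K_n+1,\ldots,K_n+k$ equal to $0$. Because $b^{-k}\ge 2\alpha_0$, this subinterval contains a closed ball of radius $\alpha_0\rho_{2n}$, which II plays. The limit point $x=\bigcap_n B(x_n,\rho_n)$ then carries a forced occurrence of the block $0^k$ at position $K_n+1$ for every round $n$, so the lower density of $0^k$ among all starting positions in the base-$b$ expansion of $x$ is at least $1/L$, where $L\approx\log_b(1/(\alpha_0\beta))$. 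Normality of $x$ demands density exactly $b^{-k}$, and when $\alpha_0$ is chosen small enough that $1/L>b^{-k}$, normality fails and $x\in N_b$.

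The main obstacle is securing the inequality $1/L>b^{-k}$ uniformly in $b$ and $\beta$: for extremely small $\beta$ the super-round length $L$ blows up, and for very large $b$ the available block length $k$ collapses to zero, so in each degenerate regime the naive strategy above does not directly succeed. I plan to handle both by letting II adaptively vary the target block length and its position with the round, and by running a pigeonhole argument over several block lengths simultaneously: I cannot choose his digits so as to restore the asymptotic frequency of \emph{every} block length to the normality value $b^{-k}$ inside the digits he controls, so at least one block-frequency is pushed away from normality no matter how he plays. Combined with the monotonicity of being $\alpha$-winning in $\alpha$ (shrinking $\alpha_0$ only helps II), this yields a single $\alpha_0\in(0,1)$ that works for all $b\ge 2$ and $\beta\in(0,1)$; properties (ii) and (i) then close the argument.
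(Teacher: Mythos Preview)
The paper does not prove this theorem; it is quoted as background from \cite{Schmidt1} to illustrate the power of Schmidt's game, with no argument supplied. There is therefore no in-paper proof to compare your proposal against.

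On its own merits: the reduction via properties (i) and (ii) is correct and is exactly Schmidt's framework, and the digit-block-forcing idea is the right engine. The genuine gap is the paragraph where you handle the obstacle you yourself correctly identify. Your inequality $1/L>b^{-k}$ is the right target, and it really does fail for small $\beta$ (and your $k$ really does collapse to $0$ for large $b$); but the proposed remedy---``adaptively vary the target block length'' together with a ``pigeonhole argument over several block lengths simultaneously''---is not carried out, and it is not clear it can be. Concretely, when II controls only a $p=\log_b(1/\alpha_0)/\log_b(1/(\alpha_0\beta))$ fraction of the digits with $p\le 1/b$, player~I has enough room to restore the frequency of any \emph{single} block you name; you assert without argument that he cannot restore \emph{all} block frequencies at once, and that assertion is doing all the work. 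The closing appeal to monotonicity in $\alpha$ is also misplaced: shrinking $\alpha_0$ does help II, but monotonicity in $\alpha_0$ does nothing to tame the $\beta$-dependence of $L$, which is the entire difficulty. As written, then, you have an outline that stops precisely where Schmidt's original argument does its real work.
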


\subsubsection{The game's description}

For the $(\alpha,\beta)$ Schmidt's game on the complete metric space $(X, d)$ with target set $T \subseteq X$, I and II each
play pairs $(x_i, \rho_i)$ in $Y=X \times \R^{>0}$. The $R \subseteq Y^{<\omega}$
of rules is defined by the conditions that $\rho_{i+1}+d(x_i,x_{i+1})\le \rho_{i}$
and $\rho_{i+1}= \begin{cases} \alpha \rho_i & \text{ if } i \text{ is even }\\
\beta \rho_i & \text{ if } i \text{ is odd }\end{cases}$. The rules guarantee that
the closed balls $B(x_i,\rho_i)=\{ x \in \R^n \colon d(x,x_i)\leq \rho_i\}$
are nested. Since the $\rho_i\to 0$, there is a unique point $z \in X$
such that $\{ z\}=\bigcap_i B(x_i,\rho_i)$. For $\vec x\in [R]$, a run of the game
following the rules, we let $f(\vec x)$ be this corresponding point $z$.
The payoff set $B\subseteq Y^\omega$ for player I is $\{ \vec x \in Y^\omega \cap [R]\colon
f(\vec x) \notin T\}$. Formally, when we refer to the $(\alpha,\beta)$ Schmidt's game with
target set $T$, we are referring to the game $G(B,R)$ with these sets $B$ and $R$
just described. The formal definition of Schmidt's $(\alpha,\beta,\rho)$
game with target set $T$ is defined in the obvious analogous manner.

\section{Main Results} \label{sec:mr}

We next prove a general result which states that certain real games are equivalent to $\hr$
games. The essential point is that real games which are intersection games (i.e., 
games where the payoff only depends on the intersection of sets coded by the moves
the players make) with the property that if one of the players has a winning strategy in the real game,
then that player has a strategy ``coded by a real'' (in a precise sense defined below), 
then the game is equivalent to a $\hr$ game. In \cite{Becker} a result attributed to Martin
is presented which showed that the determinacy of a certain class of real games, called Banach games,
follows from $\adhr$, the axiom which asserts the determinacy of $\hr$ games (that is,
games in which one player plays reals, and the other plays integers). In Theorem~\ref{hrthm}
we use ideas similar to Martin's to prove a general result which applies to
intersection games satisfying a ``simple strategy'' hypothesis. Since
many games with applications to number theory and dynamics are intersection games, it seems that in practice
the simple strategy hypothesis is the more significant requirement.

\begin{definition} \label{simple_one_round}
Let $\bG$ be a pointclass. A simple one-round $\bG$ strategy $s$ for the Polish space $X$ is 
a sequence $s=(A_n, y_n)_{n \in \omega}$ where $y_n \in X$, $A_n \in \bG$, and the $A_n$ are a partition 
of $X$. 
A simple $\bG$ strategy $\tau$ for player II is a collection $\{ s_u \}_{u \in \omega^{<\omega}}$
of simple one-round $\bG$ strategies $s_u$. A simple $\bG$ strategy $\sigma$ for player I
is a pair $\sigma=(\bar{y}, \tau)$ where $\bar{y}\in X$ is the first move and
$\tau$ is a simple $\bG$ strategy for player II.

\end{definition}

The idea for a simple one-round strategy is that  if the opponent moves in the set 
$A_n$, then the strategy will respond with $y_n$. Thus there is only ``countably much''
information in the strategy; it is coded by a real in a simple manner.
If $s=(A_n,y_n)$ is a simple one-round strategy,
we will write $s(n)=y_n$ and also $s(x)=y_n$ for any $x \in A_n$. 
A general 
simple strategy produces after each round a new simple one-round strategy to follow
in the next round. For example, suppose $\sigma$ is a simple strategy for I. 
$\sigma$ gives a first move $x_0=\bar{y}$ and a simple one-round strategy 
$s_\emptyset$. If II plays $x_1$, then $x_2=\sigma(x_0,x_1)=s_\emptyset(x_1)=
$the unique $y_{n_0}$ such that $x_1 \in A_{n_0}$ where $s_\emptyset=(A_n,y_n)$. 
If II then plays $x_3$, then $\sigma$ responds with $s_{n_0}(x_3)$. The play by
$\sigma$ continues in this manner. Formally, a general simple strategy is
a sequence $(s_u)_{u \in \omega^{<\omega}}$ of simple one-round strategies,
indexed by $u \in \omega^{<\omega}$.

If $\bG$ is a pointclass with a universal set $U\subseteq \ww\times X$,
then we may use $U$ to code simple one-round $\bG$ strategies. Namely,
the simple one-round $\bG$ strategy $s=(A_n,y_n)$ is coded by $z \in \ww$
if $z$ codes a sequence $(z)_n \in \ww$ and $U_{(z)_{2n}} =A_n$ and
$(z)_{2n+1}$ codes the response $y_n\in X$ in some reasonable manner
(e.g., via a continuous surjection from $\ww$ to $X$, the exact details are
unimportant).

\begin{remark}
For the remainder of this section, $X$ and $Y$ will denote Polish spaces.
\end{remark}

\begin{definition}
Let $R \subseteq X^{<\omega}$ be a tree on $X$ which we identify as a {\em set of rules}
for a game on $X$. We say a simple one-round
$\bG$ strategy $s$ {\em follows the rules} $R$ at position $p \in R$ if 
for any $x \in X$, if $p \concat x \in R$, then $p \concat x \concat s(x)\in R$. 
\end{definition}

\begin{definition}
Let $R \subseteq X^{<\omega}$ be a set of rules for a real game. 
Suppose $p \in X^{<\omega}$ is a position in $R$. 
Suppose $f \colon X \to X$ is such that for all $x \in X$, if $p\concat x\in R$,
then $p\concat x \concat f(x) \in R$ (i.e., $f$ is a one-round strategy which follows the rules at $p$).
A {\em simplification} of $f$ at $p$ is simple one-round strategy $s=(A_n,y_n)$ such that 

\begin{enumerate}
\item
For every $x$ in any $A_n$, if $p \concat x \in R$, then $p \concat x \concat y_n \in R$.
\item
For every $n$, if there is an $x \in A_n$ such that $p\concat x \in R$, 
then there is an $x' \in A_n$ with $p\concat x'\in R$ and $f(x')=y_n$.
\end{enumerate}
We say $\tau$ is a $\bG$ simplification of $f$ if all of the set $A_n$
are in $\bG$.

\end{definition}

\begin{definition}
We say a tree $R\subseteq X^{<\omega}$ is {\em positional} if for all $p,q \in R$
of the same length
and $x\in X$, if $p \concat x$, $q \concat x$ are both in $R$ 
then for all $r \in X^{<\omega}$, $p\concat x\concat r \in R$ iff 
$q \concat x \concat r \in R$. 
\end{definition}

\begin{theorem}[$\zf+\dc$] \label{hrthm}
Let $\bG$ be a pointclass with a universal set with $\bG$ contained within the Suslin, co-Suslin sets. 
Suppose $B\subseteq X^\omega$ and $R \subseteq X^{<\omega}$ is a positional tree,
and suppose both $B$ and $R$ are in $\bG$. 
Let $G=G(B,R)$ be the real game on $X$ with payoff $B$ and rules $R$. Suppose the following two conditions
on $G$ hold:
\begin{enumerate}
\item (intersection condition)
For any $\vec{x},\vec{y}\in [R]$, if $x(2k)=y(2k)$ for all $k$, then 
$\vec{x}\in B$ iff $\vec{y}\in B$. 
\item (simple one-round strategy condition)
If $p\in R$ has odd length, and $f\colon X \to X$ is a rule following 
one-round strategy at $p$, then there is a $\bG$-simplification of $f$ at $p$. 
\end{enumerate}
Then $G$ is equivalent to a Suslin, co-Suslin $\hr$ game $G^*$ in the sense that if I (or II)
has a winning strategy in $G^*$, then I (or II) has a winning strategy in $G$.
\end{theorem}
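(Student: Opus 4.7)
The plan is to define an $\hr$ game $G^*$ in which Player~I plays reals that encode his first $G$-move together with a sequence of simple one-round $\bG$ strategies for his subsequent responses, while Player~II plays integers indicating which partition class of I's most recently declared simple strategy her hypothetical move falls into. Concretely, in round~$0$ Player~I plays a real $r_0$ coding a pair $(x_0, s_0)$ with $x_0 \in X$ and $s_0 = (A^{(0)}_n, y^{(0)}_n)_{n<\omega}$ a simple one-round $\bG$ strategy; in round~$2k{+}1$, II plays $m_{2k+1} \in \omega$, legitimate iff some $x \in A^{(2k)}_{m_{2k+1}}$ provides a legal continuation, which determines the derived I-move $x_{2k+2} = y^{(2k)}_{m_{2k+1}}$; in round~$2k{+}2$ (for $k \geq 0$), I plays $r_{2k+2}$ coding the next simple one-round $\bG$ strategy $s_{2k+2}$. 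Player~I wins $G^*$ iff the derived I-sequence $(x_0, x_2, x_4, \dots)$ lifts to some $\vec x \in B \cap [R]$; the positional and intersection conditions together ensure both legitimacy and payoff depend only on the derived I-moves, not on any specific hidden real moves of II. The rules and payoff of $G^*$ are projective over $\bG$, so by the fact recalled in Section~\ref{detpc}, $G^*$ is a Suslin, co-Suslin $\hr$ game.

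For the direction \emph{I wins $G^*$ implies I wins $G$}, I translate a winning $\sigma^*$ into a winning $\sigma$ for I in $G$ by direct simulation: play $x_0$ from $\sigma^*$'s opening real, convert each II-move $x_{2k+1} \in X$ in $G$ into the unique integer $m_{2k+1}$ with $x_{2k+1} \in A^{(2k)}_{m_{2k+1}}$, feed $m_{2k+1}$ to $\sigma^*$ to extract the next simple strategy $s_{2k+2}$, and respond with $x_{2k+2} = y^{(2k)}_{m_{2k+1}}$. The simplification condition makes $\sigma$ rule-following at every step, and the intersection condition carries $\sigma^*$'s $G^*$-win over to a $G$-win for $\sigma$.

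The direction \emph{II wins $G^*$ implies II wins $G$} is the main obstacle, since II in $G$ cannot directly observe the virtual simple strategies $s_{2k}$ on which $\tau^*$ depends. The plan is to define a quasi-strategy $Q$ for II in $G$: at each II-position $p = (x_0, \dots, x_{2k})$, declare $x_{2k+1} \in Q(p)$ iff there exist simple one-round $\bG$ strategies $s_0, s_2, \dots, s_{2k}$---each a $\bG$-simplification at the corresponding position of some rule-following response function there---so that the virtual $G^*$-prefix obtained by plugging them in is consistent with the actual $G$-play (i.e., $\tau^*$'s integer responses $m_{2j+1}$ satisfy $y^{(2j)}_{m_{2j+1}} = x_{2j+2}$ for each $j < k$) and $\tau^*$'s next output $m$ has $x_{2k+1} \in A^{(2k)}_m$. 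The positional condition ensures admissibility depends only on I's $G$-moves so far, not on II's specific past real moves; the simplification hypothesis together with the fact that $\tau^*$ is rule-following in $G^*$ gives $Q(p) \neq \emptyset$; and $\dc$ extracts a genuine strategy $\tau$ from $Q$, which is winning by the intersection condition. The essential difficulty is precisely the one-sided nature of the simplification hypothesis: because only I's response functions are guaranteed to simplify, $\tau$ cannot be obtained from $\tau^*$ by a direct simulation, and II must select her real move in $G$ to be consistent with some simplification of I's hypothetical and unknown future strategy; reconciling this requires both the positional condition (to localize admissibility to the derived I-moves) and $\dc$ (to extract an actual strategy from the quasi-strategy).
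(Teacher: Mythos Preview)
Your definition of $G^*$ and the $\sigma^*\Rightarrow\sigma$ direction match the paper. The gap is in the $\tau^*\Rightarrow\tau$ direction, where you assert that ``the simplification hypothesis together with the fact that $\tau^*$ is rule-following in $G^*$ gives $Q(p)\neq\emptyset$,'' but this is exactly the heart of the argument and is not immediate.

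Concretely, consider the second round: II has played some $x_1\in Q((x_0))$, and I replies with a legal $x_2$. For $Q((x_0,x_1,x_2))$ to be non-empty you need a rule-following simple $\bG$-strategy $s_0$ with $s_0\bigl(\tau^*(x_0,s_0)\bigr)=x_2$. Nothing you have said produces such an $s_0$; the simplification hypothesis only tells you how to simplify a \emph{given} one-round response function $f$, and you have not identified which $f$ to simplify. In the paper, this $f$ is obtained by a proof by contradiction: one assumes no suitable $x_1$ exists, forms the relation $S(x_1,x_2)$ of witnessing counterexamples, \emph{uniformizes} $S$ to get $f$, simplifies $f$ to $s_0$, and then derives a contradiction from $\tau^*(x_0,s_0)$. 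This uniformization step is where the real work happens, and it requires $S$ to be Suslin. That in turn forces a preliminary step you have omitted entirely: one must first replace the arbitrary winning $\tau^*$ by one that is projective over $\bG$ (the paper invokes Woodin's result on $\hr$ games with scaled payoff to do this). Without that complexity bound on $\tau^*$, the relation $S$ need not be Suslin and no uniformization is available.

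There is a second, related issue with your $Q$: as you have written it, membership in $Q((x_0,x_1,x_2))$ does not constrain $x_1$ at all, so an arbitrary $x_1\in Q((x_0))$ need not be extendible. The paper instead proves the quantifier pattern $\exists x_1\,\forall x_2\,\exists s_0$ (with $x_1\in A^{s_0}_{\tau^*(x_0,s_0)}$ and $x_2=s_0(x_1)$), so that the chosen $x_1$ is guaranteed to survive any legal $x_2$. Your quasi-strategy would need to be refined to pick only such $x_1$, and then $\dc$ alone does not suffice to select it; one uniformizes the (Suslin) relation ``$x_1$ satisfies the claim relative to $x_0$'' to get the first round of $\Sigma$, and similarly at later rounds. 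In short, the paper's route through the Claim and repeated uniformization is not an alternative to your plan but the missing content of it.
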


\begin{proof}
Consider the game $G^*$ where I plays pairs $(x_{2k},s_{2k})$ and II plays 
integers $n_{2k+1}$. The rules $R^*$ of $G^*$ are that I must play at each round
a real coding $s_{2k}$ which is a simple one-round $\bG$ strategy which follows the rules $R$ 
relative to a position $p\concat x_{2k}$ for any $p$ of length $2k$ 
(this does not depend on the particular choice of $p$ as $R$ is positional).
I must also play such that $x_{2k}= s_{2k-2}(n_{2k-1})$. II must play 
each $n_{2k+1}$ so that there is a legal move $x_{2k+1} \in A^{s_{2k}}_{n_{2k+1}}$ 
with $p\concat x_{2k} \concat x_{2k+1} \in R$ (for any $p$ of length $2k$).

If I and II have followed the rules, to produce $x_{2k}, s_{2k}$ and $n_{2k+1}$, the payoff
condition for $G^*$ is as follows. Since II has followed the rules,
there is a sequence $x_{2k+1}$ such that the play $(x_0,x_1,\dots)\in [R]$. 
I then wins the run of $G^*$ iff $(x_0,x_1,\dots)\in B$. Note that by the intersection
condition, this is independent of the particular choice of the $x_{2k+1}$. 

From the definition,  $G^*$ is a Suslin, co-Suslin game.

We show that $G^*$ is equivalent to $G$. Suppose first that I wins $G^*$ by 
$\sigma^*$. Then $\sigma^*$ easily gives a strategy $\Sigma$ for $G$. For example,
let $\sigma^*(\emptyset)= (x_0, s_0)$. Then $\Sigma(\emptyset)=x_0$. 
If II plays $x_1$, then let $n_1$ be such that $x_1 \in A^{s_0}_{n_1}$. Then 
$\Sigma(x_0,x_1)=s_0(n_1)$. Continuing in this manner defines $\Sigma$. 
If $(x_0,x_1,\dots)$ is a run of $\Sigma$, then there is a corresponding
run $((x_0,s_0), n_1, \dots)$ of $\sigma^*$. As each $s_{2k}$ follows the rules
$R$, then as long as II's moves follow the rules $R$, I's moves by $\Sigma$
also follow the rules $R$.  If II has followed the rules $R$ in the run of $G$,
then the run $((x_0,s_0), n_1, \dots)$ of $\sigma^*$ has followed the rules for $G^*$
(II has followed the rules of $G^*$ since for each $n_{2k+1}$,
$x_{2k+1}$ witnesses that $n_{2k+1}$ is a legal move). Since $\sigma^*$ is 
winning for $G^*$, the sequence $(x_0,x'_1,x_2,x'_3,\dots)\in B\cap [R]$ for some 
$x'_{2k+1}$. By the intersection condition, $(x_0,x_1,x_2,x_3,\dots)\in B$.

Assume now that II has winning strategy $\tau'$ in $G^*$. We first note that
there is winning strategy $\tau^*$ for II in $G^*$ such that $\tau^*$ is projective
over $\bG$. To see this, first note that the payoff set for $G^*$ is projective
over $\bG$ as both $B$ and $R$ are in $\bG$. Also, there is a scaled pointclass
$\bG'$, projective over $\bG$, which contains the payoff set for II in $G^*$.
By a result of Woodin in \cite{kechrishr} (since II is playing the integer moves in $G^*$)
there is a winning strategy $\tau^*$ which is projective over $\bG'$, and thus projective over $\bG$. 
For the rest of the proof we fix a winning strategy $\tau^*$ for II in $G^*$
which is projective over $\bG$.

We define a strategy
$\Sigma$ for II in $G$. Consider the first round of $G$. Suppose I moves with
$x_0$ in $G$. We may assume that $(x_0)\in R$. 

\begin{claim}
There is an $x_1$  with $(x_0,x_1)\in R$ such that for all $x_2$ with
$(x_0,x_1,x_2)\in R$, there is a simple one-round $\bG$ strategy $s_0$
which follows the rules $R$ from position $x_0$ (so $(x_0,s_0)$ is a legal move for I
in $G^*$) such that if $n_1=\tau^*(x_0,s_0)$ then $x_1\in A^{s_0}_{n_1}$
and $x_2=s_0(x_1)$. 

\end{claim}

\begin{subproof}
Suppose not, then for every $x_1$ with $(x_0,x_1)\in R$ there is an $x_2$ with
$(x_0,x_1,x_2)\in R$ which witnesses the failure of the claim. 
Define the relation $S(x_1,x_2)$ to hold iff $(x_0,x_1)\notin R$ or
$(x_0,x_1,x_2)\in R$ and the claim fails, that is, for every simple
one-round $\bG$ strategy $s$ which follows $R$, if we let $n_1=\tau^*(x_0,s)$,
then either $x_1\notin A^{s}_{n_1}$ or $x_2\neq s(x_1)$. 
Since $\tau^*$, $B$, $R$ are projective over $\bG$, so is the relation $S$. 
By assumption, $\dom(S)=\R$. Since $S$ is projective over $\bG$, it is within the scaled
pointclasses, and thus there is a uniformization $f$ for $S$. Note that $f$ follows the
rules $R$. By the simple one-round strategy hypothesis of Theorem~\ref{hrthm},
there is a $\bG$-simplification $s_0$ of $f$. Let $n_1=
\tau^*(x_0,s_0)$. Since $\tau^*$ follows the rules $R^*$ for II, there is an $x_1 \in A^{s_0}_{n_1}$
such that $(x_0,x_1)\in R$. Since $s_0$ is a simplification of $f$,
there is an $x'_1$ with $(x_0,x'_1)\in R$ and $f(x'_1)=s_0(n_1)$. Let $x_2=
f(x'_1)$. From the definition of $S$ we have that $(x_0,x'_1,x_2)\in R$.
Since $S(x'_1,x_2)$, there does not exist an $s$ (following the rules)
such that $(x'_1 \in A^{s}_{n_1}
~\text{and}~ x_2=s(x'_1))$ where $n_1=\tau^*(x_0,s)$. But on the other hand, the $s_0$ we have produced
does have this property. This proves the claim.

\end{subproof}

Now that we've proved this claim, we can attempt to define the strategy $\Sigma$.  
We would like to have $\Sigma(x_0)$ be any $x_1$ as in the claim. Now since the relation
$A(x_0,x_1)$ which says that $x_1$ satisfies the claim relative to $x_0$ is projective
over $\bG$, we can uniformize it to produce the first round $x_1(x_0)$ of the strategy $\Sigma$.

Suppose I now moves $x_2$ in $G$. For each such $x_2$ such that $(x_0,x_1,x_2)\in R$,
there is a rule-following simple one-round $\bG$ strategy $s_0$ as in the claim
for $x_1$ and $x_2$. The relation $A'(x_0,x_2,s_0)$ which says that $s_0$ satisfies the claim for
$x_1=x_1(x_0)$,
$x_2$ is projective over $\bG$ and so has a uniformization $g(x_0,x_2)$.
In the $G^*$ game we have I play $(x_0, g(x_0,x_2))$. Note that $n_1=\tau^*(x_0,s_0)$
is such that $x_1 \in A^{s_0}_{n_1}$, and $x_2=s_0(x_1)$.

This completes the definition of the first round of $\Sigma$, and the proof
that a one-round play according to $\Sigma$ has a one-round simulation according
to $\tau^*$, which will guarantee that $\Sigma$ wins.
The definition of $\Sigma$ for the general round is defined in exactly the same way,
using $\dc$ to continue. 
The above argument also shows that a run of $G$ following $\Sigma$
has a corresponding run of $G^*$ following $\tau^*$. If I has followed the rules
of $G$, then I has followed the rules of $G^*$ in the associated run. Since $\tau^*$
is winning for II in $G^*$, there is no sequence sequence $x'_{2k+1}$
of moves for II such that $(x_0,x'_1,x_2,x'_3,\dots)\in B\cap [R]$. In particular,
$(x_0,x_1,x_2,x_3,\dots) \notin B$ (since $(x_0,x_1,\dots)\in [R]$). Thus, II
has won the run of $G$ following $\Sigma$.

\end{proof}

If $G$ is a real game on the Polish space $X$
with rule set $R$, we say that $G$ is an {\em intersection game}
if it satisfies the intersection condition of Theorem~\ref{hrthm}.
This is equivalent to saying that there is a
function $f\colon X^\omega \to Y$ for some Polish space $Y$
such that $f(\vec x)=f(\vec y)$ if $x(2k)=y(2k)$
for all $k$, and the payoff set for $G$ is of the form $f^{-1}(T)$ for some $T\subseteq Y$.
In many examples, the rules $R$ require the players to play decreasing closed sets with diameters
going to $0$ in some Polish space, and the function $f$ is simply giving the unique point 
of intersection of these sets. If we have a fixed rule set $R$ and a fixed
function $f$, the {\em class of games} $G_{R,f}$ associated to $R$ and $f$ is the collection
of games with rules $R$ and payoffs of the form $f^{-1}(T)$ for $T\subseteq Y$.
Thus, we allow the payoff set $T$ to vary, but the set of rules $R$ and the ``intersection function'' $f$
are fixed. In practice, $R$ and $f$ are usually simple, such as Borel relations/functions.

\begin{theorem}[$\ad$] \label{detthm}
Suppose $\bG$ is a non-selfdual  pointclass within the Suslin, co-Suslin sets
and $G_{R,f}$ is a class of intersection games on the Polish space $X$ with $R$, $f \in \bG$,
and $R$ is positional (as above $f \colon X^\omega\to Y$, where $Y$ is a Polish space).
Suppose that for every $T\subseteq Y$ which is Suslin and co-Suslin, if player
I or II was a winning strategy in $G_{R,f}(T)$, then that player has a
winning simple $\bG$-strategy. Then for every $T\subseteq Y$, the game
$G_{R,f}(T)$ is determined.
\end{theorem}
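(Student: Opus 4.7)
The plan is to reduce $G_{R,f}(T)$ to an integer game $H(T)$ whose determinacy follows from $\ad$ for arbitrary $T$, and then transfer winning strategies using the intersection condition, positionality of $R$, and the hypothesis of the theorem. Since $\bG$ is non-selfdual under $\ad$, it has a universal set, which I use to code simple $\bG$-strategies as reals (i.e., as sequences of integers). The game $H(T)$ has both players alternating integer moves; via a fixed bookkeeping schedule, I's moves jointly encode a simple $\bG$-strategy $\sigma$ for I in $G_{R,f}(T)$, while II's moves encode ``selectors'' $n_1,n_3,\ldots$, where $n_{2k+1}$ names a block $A_n$ in the partition of the one-round simple strategy $s_{2k}$ decoded from I's code so far. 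The simulated $G_{R,f}$-run is $x_0=\bar y$ (the first move of $\sigma$) and $x_{2k+2}=s_{2k}(n_{2k+1})$; by the intersection condition, the payoff depends only on these even-indexed moves, so $H(T)$'s payoff is well defined: I wins $H(T)$ iff II's selectors become illegal (no legal $R$-move lies in $A^{s_{2k}}_{n_{2k+1}}$) or $f(\vec x)\notin T$.

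Since $H(T)$ is an integer game, $\ad$ gives its determinacy for \emph{any} $T$. If I wins $H(T)$ via $\alpha^*$, I construct I's winning strategy $\Sigma$ in $G_{R,f}(T)$ by simulation: given II's real move $x_{2k+1}$, $\Sigma$ identifies the unique $n$ with $x_{2k+1}\in A^{s_{2k}}_n$ (using the current $s_{2k}$ decoded from $\alpha^*$'s output) and uses $\alpha^*$ to produce I's next move. Positionality of $R$ ensures that legal moves in $H(T)$ correspond to legal moves in $G_{R,f}(T)$, and the intersection condition ensures the runs of the two games yield the same payoff.

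If II wins $H(T)$ via $\tau^*$, the transfer back is more subtle, because $\tau^*$ is defined against I's integer-encoded simple strategies, whereas in $G_{R,f}(T)$ player I may play arbitrary reals. Here the hypothesis of the theorem enters: for Suslin, co-Suslin $T$, if I had a winning strategy in $G_{R,f}(T)$ she would have a winning simple $\bG$-strategy, which when encoded in $H(T)$ would defeat $\tau^*$, a contradiction; hence I has no winning strategy in $G_{R,f}(T)$, and combined with a direct construction of II's real-game strategy from $\tau^*$ (choosing II's real response as any legal witness in $A^{s_{2k}}_{n_{2k+1}}$), II wins $G_{R,f}(T)$. For arbitrary $T$ this is the main obstacle: justifying the transfer of II's $H(T)$-win to $G_{R,f}(T)$ without assuming $T$ is Suslin, co-Suslin will require either a Wadge-theoretic bootstrap under $\ad$ or a more intrinsic use of the simple $\bG$-strategy coding to account for I's arbitrary real moves, leveraging the hypothesis at the analogous Suslin, co-Suslin level to which one can always reduce by continuous preimage.
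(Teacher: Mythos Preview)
Your proposal has a genuine gap, and you essentially acknowledge it in your final paragraph. The problem is exactly where you say it is: when II wins your asymmetric integer game $H(T)$, you have no mechanism to convert $\tau^*$ into a II-strategy for the real game $G_{R,f}(T)$, because $\tau^*$ expects I to feed in bits of a simple $\bG$-strategy code, while in the real game I plays arbitrary reals. Your remark that ``a Wadge-theoretic bootstrap'' or ``a more intrinsic use of the coding'' will handle this is not a proof; in fact no direct transfer of this kind is available, and the hypothesis of the theorem gives you nothing for arbitrary $T$.

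The paper's argument differs from yours in two linked ways, and the combination is what closes the gap. First, the auxiliary integer game is \emph{symmetric}: both players play out reals coding simple $\bG$-strategies $\sigma_x,\tau_y$, and II wins iff the run $\sigma_x*\tau_y$ is a II-win in $G_{R,f}(T)$. Second (and this is the key idea, going back to Becker, Freiling, and Martin), a winning strategy $w$ for II in this integer game is not transferred directly; instead one uses $w$ to define
\[
A=\{\vec y\in X^\omega:\exists z\in S_1\ \vec y=\sigma_z*\tau_{w(z)}\},
\]
where $S_1$ is the set of codes for rule-following simple $\bG$-strategies for I. Then $f(A)\subseteq Y\setminus T$, and crucially $A$ (hence $Y\setminus f(A)$) is projective over $\bG$, so Suslin and co-Suslin. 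One now considers the \emph{new} real game $G_{R,f}(Y\setminus f(A))$: this game has a Suslin, co-Suslin target, so Theorem~\ref{hrthm} applies and the game is determined. The hypothesis of Theorem~\ref{detthm} is invoked only here, to rule out a I-win (a winning simple $\bG$-strategy $\sigma_z$ for I would force $f(\sigma_z*\tau_{w(z)})\in Y\setminus f(A)$, while $\sigma_z*\tau_{w(z)}\in A$ by definition). Hence II wins $G_{R,f}(Y\setminus f(A))$, and since $f(A)\subseteq Y\setminus T$ this same strategy wins $G_{R,f}(T)$. Your proposal is missing precisely this reduction to an auxiliary Suslin, co-Suslin target set manufactured from the integer-game strategy.
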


\begin{proof}
Fix the rule set $R$ and function $f$ in $\bG$. 
Let $T \subseteq Y$, we show the real game 
$G_{R,f}(T)$ is determined. Following Becker, we consider the integer game $G$ where I and II play out
reals $x$ and $y$ which code trees (indexed by $\omega^{<\omega}$) of simple one-round $\bG$ strategies
The winning condition for II as follows.
If exactly one of $x$, $y$  fails to be a simple $\bG$-strategy, then that player loses.
If both fail to code simple $\bG$-strategies, then II wins. If 
$x$ codes a simple $\bG$-strategy $\sigma_x$ and $y$ codes a simple $\bG$-strategy $\tau_y$,
then II wins iff $\sigma_x*\tau_y \in G_{R,f}(T)$, where $\sigma*\tau$ denotes the unique sequence of reals
obtained by playing $\sigma$ and $\tau$ against each other. From $\ad$, the game $G$ is determined.
Without loss of generality we may assume that II has a winning strategy $w$ for $G$. 
Let $S_1\subseteq \ww$ be the set of $z$ such that $z$ codes a simple $\bG$-strategy for player I
which follows the rules $R$. 
Likewise, $S_2$ is the set of $z$ coding rule following $\bG$-strategies $\tau_z$ for II.
Note that $S_1$, $S_2$ are projective over $\bG$. 
Let 
\[
A=\{ \vec{y} \in X^\omega \colon \exists z \in S_1\ \vec y= \sigma_z * \tau_{w(z)} \}.
\]

Since $w$ is a winning strategy for II in $G$, $A\subseteq
X^\omega\setminus G_{R,f}(T)$, so $f(A) \subseteq Y \setminus T$.
Note that $A$ is projective over $\bG$ by the complexity assumption on
$R$ and the fact that $S_1$ is also projective over $\bG$.  We claim
that it suffices to show that II wins the real game $G_{R,f}(Y
\setminus f(A))$. This is because if II wins $G_{R,f}(Y
\setminus f(A))$ with run $\vec y$, i.e.  $\vec y \not \in G_{R,f}(Y
\setminus f(A))$, then $f(\vec y) \in f(A) \subseteq Y \setminus T$,
so $\vec y \not \in G_{R, f}(T)$, thus $\vec y$ is a winning run for II in $G_{R, f}(T)$.

We see that $Y \setminus f(A)$ is projective over $\bG$, and thus
$G_{R, f}(Y \setminus f(A))$ is equivalent to a Suslin, co-Suslin
$\hr$ game by Theorem \ref{hrthm} which is determined (see \cite{kechrishr}), and
so $G_{R, f}(Y \setminus f(A))$ is determined. Now it suffices to
show that I doesn't have a winning strategy in $G_{R, f}(Y \setminus
f(A))$.

Suppose I had a winning strategy for $G_{R, f}(Y \setminus f(A))$. By
hypothesis, I has a winning simple $\bG$-strategy coded by some $z \in
\ww$. Let $\vec y= \sigma_z* \tau_{w(z)}$ (note that $z \in S_1$ and
so $w(z)\in S_2$). Since $\sigma_z$ is a winning strategy for I in
$G_{R, f}(Y \setminus f(A))$, we have $f(\vec y) \in Y \setminus
f(A)$. On the other hand, from the definition of $A$ from $w$ we have
that $f(\vec y) \in f(A)$, a contradiction.
\end{proof}

We next apply Theorem~\ref{detthm} to deduce the determinacy of Schmidt's $(\alpha,\beta,\rho)$
games in $\R$ from $\ad$.

\begin{theorem} [$\ad$]\label{thm:schmidtdet}
For any $\alpha,\beta \in (0,1)$, any $\rho \in \R_{>0}$, and any $T\subseteq \R$,
the $(\alpha,\beta,\rho)$ Schmidt's game with target set $T$ is determined. 
\end{theorem}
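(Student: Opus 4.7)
The plan is to apply Theorem~\ref{detthm} to the class of $(\alpha,\beta,\rho)$ Schmidt games on $\R$ parameterized by the target set $T\subseteq \R$. First I would cast the game as an intersection game $G_{R,f}(T)$: the move set is $X=\R\times\R_{>0}$, the rule set $R\subseteq X^{<\omega}$ is the Borel tree enforcing the nesting condition $\rho_{i+1}+d(x_i,x_{i+1})\le \rho_i$, the alternating radius constraints $\rho_{2n+1}=\alpha\rho_{2n}$ and $\rho_{2n+2}=\beta\rho_{2n+1}$, and the initial condition $\rho_0=\rho$; and $f\colon X^\omega\to \R$ is the Borel map sending a rule-following run to the unique point in $\bigcap_n B(x_n,\rho_n)$. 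Taking $\bG$ to be a suitable non-selfdual Borel pointclass, $R$ and $f$ lie in $\bG$; the intersection condition is immediate since $\rho_{2n}=(\alpha\beta)^n\rho\to 0$ forces the limit to be determined by the subsequence $(x_{2n})$ of I's moves; and $R$ is positional because legality depends only on the last played ball.

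The substantive step is to verify the simple strategy hypothesis of Theorem~\ref{detthm}, which via the proof of that theorem reduces to checking the simple one-round strategy condition of Theorem~\ref{hrthm}. I would fix an odd-length position $p$ ending in I's ball $B(x_{2k},\rho_{2k})$, set $s=(1-\beta)\alpha\rho_{2k}$, and consider an arbitrary rule-following one-round $f\colon \R\to \R$ for I with $|f(x)-x|\le s$ on the legal domain $D=[x_{2k}-(1-\alpha)\rho_{2k},\,x_{2k}+(1-\alpha)\rho_{2k}]$. The driving one-dimensional observation is that for each $x\in D$ the value $y=f(x)$ is a legal response not only to $x$ but to every $x'$ in the closed interval $N(x)=[f(x)-s,f(x)+s]\cap D$, which always contains $x$ and has total length $2s$. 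Using $\dc$ I would adaptively choose pivots $x_0,x_1,\ldots$ so that each $x_n$ lies in the uncovered remainder, set $A_n=N(x_n)\setminus\bigcup_{m<n}A_m$ and $y_n=f(x_n)$, and dispatch the illegal leftover $\R\setminus D$ into one extra piece with an arbitrary response. Condition~(2) of the simplification then holds automatically with witness $x'=x_n$, and condition~(1) is built into the definition of $N(x_n)$.

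The main obstacle is to show that this greedy partition actually exhausts $D$ in $\omega$ stages for every (possibly highly discontinuous) $f$. This is where the linear order on $\R$ is essential: I would orient each tile $N(x_n)$ as a left- or right-half-open rational interval according to the sign of $f(x_n)-x_n$, absorb the countable list of shared endpoints into singleton pieces, and use the density of rationals to ensure that even pivots with $|f(x_n)-x_n|=s$ still cover their neighborhood. Once the $\bG$-partition is in place, Theorem~\ref{hrthm} reduces the Schmidt game to a Suslin, co-Suslin $\hR$ game whose determinacy from $\ad$ alone is the theorem of Woodin cited from \cite{kechrishr}, and the transfer argument inside Theorem~\ref{detthm} yields a winning strategy for one of the players in the original game. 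The same construction visibly fails in $\R^n$ for $n\ge 2$, where balls cannot be tiled by sets on which a single response is uniformly legal, matching the later negative result of Theorem~\ref{thm:r3}.
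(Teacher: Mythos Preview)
Your overall plan is the paper's: cast Schmidt's $(\alpha,\beta,\rho)$ game on $\R$ as a positional Borel intersection game and invoke Theorem~\ref{detthm}, with the only substantive work being the one-dimensional covering argument. One minor bookkeeping difference: the paper verifies the hypothesis of Theorem~\ref{detthm} \emph{directly} (a winning $\Sigma$ yields a simple Borel winning $\tau$), rather than the one-round simplification condition of Theorem~\ref{hrthm}. Your claimed reduction of the former to the latter is reasonable for this game (since either player's subsequence of centers determines the intersection point), but you never actually argue it, and it is not the route the paper takes.

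The real gap is in your greedy covering. You propose to pick pivots $x_n$ in the uncovered remainder and tile by $N(x_n)=[f(x_n)-s,f(x_n)+s]\cap D$, but you give no argument that any adaptive choice of pivots exhausts $D$ in $\omega$ stages, and your sketched fix (``orient tiles as half-open according to the sign of $f(x_n)-x_n$, absorb the countable list of shared endpoints'') does not supply one. The difficulty is exactly the tangent case $|f(x)-x|=s$: if for instance $f(x)=x-s$ on all of $D$, then $N(x)=[x-2s,x]$ and every pivot is the right endpoint of its own tile, so a naive choice of pivots will leave right-open gaps that each require infinitely many further pivots to close while simultaneously spawning new gaps. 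Termination in $\omega$ steps is possible here, but only with a carefully engineered pivot sequence, and nothing in your proposal pins one down. The paper sidesteps the greedy iteration entirely: for each $z$ it takes a \emph{half-open} interval $I_z$ with $z$ as the closed endpoint on which the response to $z$ remains legal, then extracts a countable subcover in one stroke by applying Lindel\"of to the interiors $\{\inter(I_z)\}$ and observing that $\R\setminus\bigcup_z\inter(I_z)$ is automatically countable (each of its points being one-sidedly isolated). That argument is short, uniform, and does not depend on any choice of pivots.
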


\begin{proof}
Let $\bG$ be the pointclass $\bP^1_1$ of co-analytic sets. Let $R$ be the tree
described by the rules of the $(\alpha,\beta,\rho)$ Schmidt's game. $R$
is clearly a closed set and is positional.  The function $f$ of Theorem~\ref{detthm}
is given by $\{ f((x_i,\rho_i)_i) \}= \bigcap_i B(x_i,\rho_i)$. This
clearly satisfies the intersection condition, that is, $G_{R,f}$
is a class of intersection games. Also, $f$ is continuous, so $f\in \bG$.

It remains to verify the simple strategy condition of Theorem~\ref{detthm}.
The argument is essentially symmetric in the players, so we consider the case
of player II. In fact we show that for any $T\subseteq \R$, if II
has a winning strategy for the $(\alpha,\beta,\rho)$ Schmidt's game, then II
has a simple Borel strategy. Fix a winning strategy $\Sigma$
for II in this (real) game. Consider $\Sigma$ restricted to the
first round of the game. For every $z_0 \in \R$, there is a half-open
interval $I_{z_0}$ of the form $[z_0,z_0+\epsilon)$ or $(z_0-\epsilon,z_0]$
such that for any $x_0\in I_{z_0}$, we have that $((x_0,\rho_0),
\Sigma(z_0,\rho_0))\in R$. That is, for any $x_0 \in I_{z_0}$ we have that  $\Sigma$'s
response to $(z_0,\rho_0)$ is still a legal response to the play $(x_0,\rho_0)$. 
Consider the collection $\cC$ of all intervals $I=[z,z+\epsilon)$ or $I=(z-\epsilon,z]$
having this property. So, $\cC$ is a cover of $\R$ by half-open intervals. 
There is a countable subcollection $\cC' \subseteq \cC$ which covers $\R$.
To see this, first get a countable $\cC_0\subseteq \cC$ such that
$\cup \cC_0 \supseteq \bigcup_{I\in \cC} \inter(I)$.
The set $\R\setminus\bigcup_{I\in \cC} \inter(I)$ must be countable, and so
adding countably many sets of $\cC$ to $\cC_0$ will get $\cC'$ as desired. 
Let $\cC'= \{ I_{z_n} \}_{n \in \omega}$. The first round of the
simple Borel strategy $\tau$ is given by $(A_n,y_n)$ where
$A_n =\{ (x_0,\rho_0)\colon x_0 \in I_{z_n}\setminus \bigcup_{m<n} I_{z_m} \}$ and
$y_n=\Sigma( z_n,\rho_0)$. Clearly $(A_n,y_n)$ is a simple one-round Borel strategy
which follows the rules $R$ of the $(\alpha,\beta,\rho)$ Schmidt's game.
This defines the first round of $\tau$. 
Using $\dc$, we continue inductively to define each subsequent round
of $\tau$ in a similar manner.

To see that $\tau$ is a winning strategy for II, simply note that for any run
of $\tau$ following the rules there is a run of $\Sigma$ producing the same
point of intersection. 
\end{proof}

This theorem immediately implies the following corollary about Schmidt's original $(\alpha, \beta)$ game.

\begin{corollary}[$\ad$]
For any $\alpha,\beta \in (0,1)$, and any $T\subseteq \R$,
exactly one of the following holds.
\begin{enumerate}
\item Player I has a winning strategy in Schmidt's $(\alpha,\beta)$ game.
\item For every $\rho \in \R_{>0}$, player II has a winning strategy in Schmidt's $(\alpha,\beta, \rho)$ game.
\end{enumerate}
\end{corollary}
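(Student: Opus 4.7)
The plan is to derive this as a direct consequence of Theorem~\ref{thm:schmidtdet}, using only the observation that a strategy in the $(\alpha,\beta,\rho)$ game interacts with a strategy in the $(\alpha,\beta)$ game in the obvious way via I's first move.

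First I would verify that (1) and (2) are mutually exclusive. Suppose I has a winning strategy $\sigma$ in the $(\alpha,\beta)$ game, and let $(x_0,\rho_0)=\sigma(\emptyset)$ be its opening move; set $\rho=\rho_0$. Define a strategy $\sigma'$ for I in the $(\alpha,\beta,\rho)$ game by letting $\sigma'$ play $(x_0,\rho_0)$ as its first move and thereafter copying $\sigma$ on the same run. Since $\sigma$ wins every run of the $(\alpha,\beta)$ game in which I's first radius is $\rho_0=\rho$, the strategy $\sigma'$ is winning for I in the $(\alpha,\beta,\rho)$ game. This contradicts (2), which asserts that II has a winning strategy in that game.

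Next I would show that at least one of (1), (2) holds. Assume (1) fails, so I has no winning strategy in the $(\alpha,\beta)$ game. Fix any $\rho\in\R_{>0}$ and suppose toward contradiction that I has a winning strategy $\sigma'$ in the $(\alpha,\beta,\rho)$ game. Then the rule for I in the $(\alpha,\beta)$ game obtained by setting $\sigma(\emptyset)$ to be $\sigma'$'s opening move (which uses radius $\rho$) and continuing with $\sigma'$ on all subsequent rounds is a winning strategy for I in the $(\alpha,\beta)$ game, contradicting our assumption. Hence I has no winning strategy in the $(\alpha,\beta,\rho)$ game. By Theorem~\ref{thm:schmidtdet}, the $(\alpha,\beta,\rho)$ game is determined, so II has a winning strategy in it. As $\rho$ was arbitrary, (2) holds.

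There is essentially no obstacle here beyond Theorem~\ref{thm:schmidtdet} itself: the corollary is really a reformulation of the $(\alpha,\beta,\rho)$ determinacy statement once one notes that winning strategies in the $(\alpha,\beta)$ game and the $(\alpha,\beta,\rho)$ games for varying $\rho$ are linked only through the first-move radius. The only subtlety worth flagging, which is the point of the parenthetical comment in the introduction, is that this argument does \emph{not} yield a winning strategy for II in the $(\alpha,\beta)$ game itself in case (2): selecting a winning strategy for II in the $(\alpha,\beta,\rho)$ game as a function of $\rho$ would require a uniformization, and no such choice is provided here.
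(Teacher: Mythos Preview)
Your argument is correct and is exactly the intended one: the paper does not write out a proof but simply states that the corollary follows immediately from Theorem~\ref{thm:schmidtdet}, and your two-step argument (mutual exclusivity via I's opening radius, then determinacy of the $(\alpha,\beta,\rho)$ game for each $\rho$ when (1) fails) is precisely how one unpacks that implication. Your closing remark about the lack of a uniform choice of II's strategy in $\rho$ also correctly echoes the paper's caveat in the introduction.
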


In contrast to these results, the situation is dramatically different for $\R^n$, $n \geq 3$.

\begin{theorem} \label{thm:r3}
$\ad^+$ does not imply that the $(\alpha, \beta, \rho)$ Schmidt's game for $T \subseteq \R^n$, $n \geq 3$
are determined.
\end{theorem}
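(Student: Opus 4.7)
The plan is to argue by contrapositive: I show that if every Schmidt $(\alpha,\beta,\rho)$ game on $\R^n$ (for $n\geq 3$) is determined for every target set, then $\unif$ holds. By Theorem~\ref{martinwoodin}, $\ad+\unif\Leftrightarrow\adr$, and $\adr$ is strictly stronger than $\adp$ (for example, $\adp$ holds in $L(\R)$ under mild large-cardinal assumptions while $\adr$ does not), so this establishes that $\adp$ cannot imply the Schmidt determinacy in question.

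Given a relation $R\subseteq\R\times\R$ with $\dom(R)=\R$ and no uniformization, I construct a target set $T_R\subseteq\R^n$ so that neither player wins the $(\alpha,\beta,\rho)$ Schmidt game with target $T_R$. Fix Borel bijections (or Borel surjections with definable sections) $\phi,\psi:\R^n\to\R$; from I's first move $x_0$ read $z:=\phi(x_0)$, and from II's first move $x_1$ read $w:=\psi(x_1-x_0)$ (after an appropriate rescaling of $x_1-x_0$ into a fixed reference ball). I design $T_R$ so that the continuation of the Schmidt game from the position $(x_0,x_1)$ is trivially won by II when $R(z,w)$ holds, and trivially won by I otherwise. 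Granting such a $T_R$: player I has no winning strategy, since for any first move $x_0$ of I, by $\dom(R)=\R$ there is $w$ with $R(\phi(x_0),w)$, which II realizes by some admissible $x_1$ and thereby wins the continuation; and player II has no winning strategy either, since any such strategy $\tau$ would, via the encoding maps, yield the function $z\mapsto \psi(\tau(\phi^{-1}(z))-\phi^{-1}(z))$ as a uniformization of $R$, contradicting our choice.

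The main obstacle is to define $T_R$ consistently as a single subset of $\R^n$: distinct first-round pairs $(x_0,x_1)$ and $(x_0',x_1')$ can give rise to overlapping balls $B(x_1,\alpha\rho)$, $B(x_1',\alpha\rho)$ on which $T_R$ would be required to behave inconsistently (e.g., entirely inside $T_R$ for one pair, entirely outside for the other). Resolving this is where the hypothesis $n\geq 3$ enters essentially: the extra geometric freedom of $\R^n$ for $n\geq 3$ lets one encode $(x_0,x_1)$-dependence into auxiliary directions orthogonal to the encoding axes, so the prescribed continuation targets can be engineered individually without conflict. By contrast, for $n=1$, Theorem~\ref{thm:schmidtdet} shows the game is determined from $\ad$ alone, so no such construction can succeed.
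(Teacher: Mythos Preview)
Your high-level plan---reduce Schmidt determinacy in $\R^n$ to $\unif$ and invoke Theorem~\ref{martinwoodin}---is exactly the paper's strategy. The gap is in the construction of $T_R$, which you have not actually carried out. You identify the obstacle correctly: if you try to make the continuation from $(x_0,x_1)$ ``trivially won'' by one player or the other, the balls $B(x_1,\alpha\rho)$ arising from different first-round pairs overlap, and your recipe may ask $T_R$ to contain and omit the same point. Your appeal to ``auxiliary directions orthogonal to the encoding axes'' does not resolve this; with arbitrary Borel codings $\phi,\psi$ there is no geometric separation to exploit, and you have given no mechanism for making the continuations disjoint or for reconciling conflicting demands.

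The paper avoids this problem by a different idea: rather than prescribing $T_R$ on neighborhoods, it prescribes $T_R$ only at limit points of \emph{optimal tangent play}. Concretely, I encodes a real $x$ by centering the first ball at $(x,0,0)$ on the $x$-axis; II encodes $\theta\in[0,2\pi)$ by the direction of her tangent move away from the axis. If both players thereafter play tangently (II maximizing, I minimizing distance to the axis), the run converges to a single point at a fixed radius $r=r(\alpha,\beta,\rho)$ on the cylinder about the $x$-axis. The target set is then $T=\{(x,r\cos\theta,r\sin\theta):R(x,\theta)\}$ together with everything strictly outside the cylinder of radius $r$. This is a globally well-defined set with no consistency issue: overlapping second balls are irrelevant because membership in $T$ is decided only at the limit, and any deviation by I from tangent-minimizing play lets II escape outside the cylinder in finitely many moves. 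The role of $n\geq 3$ is precisely that the set of tangent directions orthogonal to the $x$-axis is at least a circle, giving II a continuum of choices to encode $\theta$; in $\R^2$ there are only two such directions. To complete your proof you would need either to imitate this tangent-play construction or to supply a genuinely different $T_R$ that resolves the overlap obstacle you flagged.
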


\begin{proof}
We will show that the determinacy of these games in $\R^3$ implies that
all relations $R \subseteq \mathbb{R} \times \mathbb{R}$ can be
uniformized.   It is known that $\ad^+$ does not suffice to imply this.  The proof for larger $n$ is identical. 

Let $R \subseteq \mathbb{R} \times [0, 2\pi)$ such that $\forall x \in
\mathbb{R}~ \exists \theta \in [0, 2\pi)~ (x, \theta) \in R$.
Let $r=\rho - 2\rho\alpha(1-\beta) \sum_{n=0}^\infty (\alpha \beta)^n$.
Let the target set for player II be $T=\{ (x, r\cos\theta, r\sin\theta) \colon (x, \theta) \in R\} \cup \{
(x, y, z) \colon  y^2+z^2>r\}$. The value $r$ is the distance from the
$x$-axis that is obtained if I makes a first move $B((x_0,0,0),\rho)$ centered on the $x$-axis,
and at each subsequent turn II moves to maximize the distance from the
$x$-axis and I moves to minimize it (note that these moves all have centers
having the same $x$-coordinate $x_0$). The target set $T$ codes
the relation $R$ to be uniformized along the boundary of the cylinder
of radius $r$ centered along the $x$-axis. 

We claim that I cannot win the $(\alpha, \beta, \rho)$ Schmidt's game for $T$.
First note that if I plays his center not on the $x$-axis,
then II can easily win in finitely many moves by simply playing to
maximize distance to the $x$-axis, (This will win the game by the
definition of $r$).  So suppose I plays $(x, 0, 0)$ as the center of
his first move. Fix $\theta$ so that $R(x, \theta)$ holds.  Then II
can win by always playing tangent towards the direction $(0,
\cos\theta, \sin\theta)$ maximizing distance to the $x$-axis.  If I
resists and minimizes distance to the $x$-axis, then the limit point
will be in $\{ (x, r\cos\theta, r\sin\theta) \colon (x, \theta) \in R\}$.
If I ever deviates from this, then again II can win after
finitely many moves by maximizing distance to the $x$-axis.

This shows that I does not have a winning strategy, so by the
assumption that these games are determined, II has a winning
strategy $\tau$.  By similar arguments to those above, $\tau$ must
maximize distance from the $x$-axis in response to optimal play by
I.  But one can take advantage of this to easily define a
uniformization $f$ of $R$ from $\tau$ by the following:

\[f(x) = \theta \Longleftrightarrow \tau\bigg(B\Big(\left(x, 0,0 \right), \rho\Big)\bigg)
= B\Big(\left(x, (\rho-\alpha\rho)\cos\theta, (\rho-\alpha\rho)\sin\theta\right), \alpha \rho\Big).\]
\end{proof}

\section{Further Results regarding Schmidt's game} \label{sec:or}

In \S\ref{sec:mr} we showed that $\ad$ suffices to get the determinacy of the $(\alpha,\beta,\rho)$
Schmidt's game for any target set $T\subseteq \R$, but that for $T\subseteq \R^n$, $n \geq 3$,
$\ad$ (or $\ad^+$) is not sufficient. The proof for the positive result in $\R$ used a reduction
of Schmidt's $(\alpha,\beta,\rho)$ game to a certain $\hr$ game. The fact that $\ad$ does not suffice
for $T\subseteq \R^n$, $n \geq 3$, shows that in general the $(\alpha,\beta,\rho)$ Schmidt's game is not
equivalent to an integer game (for $T\subseteq \R$ it still seems possible the game is equivalent to
an integer game).
A natural question is to what extent we can reduce Schmidt's game to an integer game.
In this section we prove two results concerning this question.

In the proof of Theorem~\ref{thm:r3} it is important that the value $r=r(\alpha,\beta)$ was calibrated to the
particular values of $\alpha$, $\beta$. In other words, if we change the values of $\alpha$, $\beta$ to $\alpha',\beta'$,
using the same target set, so that $r(\alpha'\beta')\neq r(\alpha,\beta)$, then the game is easily determined.
In Theorem~\ref{hyperbolathm} we prove a general result related to this phenomenon. Namely,
we show, assuming $\ad$, that for $T$ (in any Polish space) and each value of $p \in (0,1)$ there is at most
one value of $\alpha, \beta$ with $\alpha\beta=p$ such that the $(\alpha,\beta)$ Schmidt's game with target
set $T$ is not determined. Thus the values of $\alpha,\beta$ must be tuned precisely to have a possibility
of the game being not determined from $\ad$.

The proof of Theorem~\ref{thm:r3} also uses critically the ability of each player to play a ball tangent
to the previous ball. In Theorem~\ref{thm:tangent} below, we make this precise by showing
that the modification of Schmidt's $(\alpha, \beta, \rho)$ game where the players are required to make non-tangent moves is
determined from $\ad$ alone. Thus, the ability of the players to play tangent at each move is a key
obstacle in reducing Schmidt's game to an integer game.

In the Banach-Mazur game, the rational modification of the game is
fairly straightforward, i.e. the allowed moves for the players are
just representatives of balls with centers from some fixed countable
dense subset of $X$ and the radii are positive rationals, in Schmidt's
game there is a slight difference, again due to the restriction on the
players' radii.

\begin{definition}
For a Polish  $(X, d)$ and a fixed countable dense
subset $D \subseteq X$ we define the \emph{rational Schmidt} $(\alpha,
\beta)$ game by modifying Schmidt's
$(\alpha, \beta)$-game by restricting the set of allowed moves for
both players to balls $B(x_i,\rho_i)$ where $x_i \in D$ and
$\rho_i \in \left( \bigcup_{n, m \in
  \mathbb{N}} \alpha^n\beta^m\mathbb{Q}_{>0}\right)$. 
\end{definition}

\begin{theorem}
\label{lemma1}
Let $(X, d)$ be a Polish space.  Let $0<\alpha<\alpha'<1$,
$0<\beta'<\beta<1$, and $\alpha\beta=\alpha'\beta'$. Let $D$ be a
countable dense subset of $X$.
\begin{enumerate}
\item If II wins the rational Schmidt's $(\alpha', \beta')$ game for target set $T$
  then II wins Schmidt's $(\alpha, \beta)$ game for $T$. 
\item If I wins the rational Schmidt's $(\alpha, \beta)$ game for target set $T$
  then I wins Schmidt's $(\alpha', \beta')$ game for $T$.
\end{enumerate}
\end{theorem}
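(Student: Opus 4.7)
The plan is to prove both parts by running the real and rational games in parallel, letting the winning strategy from one game drive play in the other. The key identity $\alpha\beta = \alpha'\beta'$ ensures that both games contract radii by the same factor per full round, so they can be kept synchronized at I's (even) turns and differ only at II's (odd) turns, where the radii compare by the factor $\alpha/\alpha' < 1$.

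For part (1), given a winning $\tau$ for II in the rational $(\alpha',\beta')$ game, we will build $\tau^*$ for II in the real $(\alpha,\beta)$ game as follows. When I plays $B(x_0,\rho_0)$ in the real game, we first choose a rational initial radius $\rho_0'$ from the dense set $\bigcup_{n,m}(\alpha')^n(\beta')^m\mathbb{Q}_{>0}$ with $\rho_0' \in [(\alpha/\alpha')\rho_0,\rho_0)$, and then pick $x_0' \in D$ with $d(x_0,x_0') \leq \rho_0 - \rho_0'$, so that $B(x_0',\rho_0') \subseteq B(x_0,\rho_0)$. We feed this to $\tau$ to obtain II's rational response $B(x_1',\alpha'\rho_0')$, and have $\tau^*$ play $B(x_1,\alpha\rho_0)$ with $x_1 = x_1'$. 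The constraint $\rho_0' \geq (\alpha/\alpha')\rho_0$ is exactly what makes the real-game inequality $d(x_0,x_1) + \alpha\rho_0 \leq \rho_0$ follow (via the triangle inequality) from the rational-game inequality $d(x_0',x_1') + \alpha'\rho_0' \leq \rho_0'$. At each subsequent I-turn we repeat: translate $B(x_{2k},\rho_{2k})$ to a rational ball with radius $(\rho_0'/\rho_0)\rho_{2k}$ sitting inside it, and continue.

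For part (2), given a winning $\sigma$ for I in the rational $(\alpha,\beta)$ game, we define $\sigma^*$ by having I play $\sigma$'s rational moves verbatim at every I-turn of the real $(\alpha',\beta')$ game, so that the real and rational balls agree at all even indices. When II plays $B(x_{2k+1},\rho_{2k+1})$ with $\rho_{2k+1}=\alpha'\rho_{2k}$, we choose $x_{2k+1}' \in D$ within distance $(\alpha'-\alpha)\rho_{2k}$ of $x_{2k+1}$; then the rational ball $B(x_{2k+1}',\alpha\rho_{2k})$ is contained in II's real ball, and a parallel triangle inequality computation shows that it is a legal rational response to $B(x_{2k}',\rho_{2k})$. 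Feeding this to $\sigma$ produces I's next rational move, which we play in the real game as well.

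In both constructions the invariant that the real and rational balls nest appropriately at each index, together with the fact that radii tend to $0$, forces the unique intersection points of the two games to coincide, so a winning rational play transfers to a winning real play. The main technical obstacle will be the triangle-inequality bookkeeping verifying that both games follow their rules at every step; the estimates are tight precisely at the threshold $\rho_0' = (\alpha/\alpha')\rho_0$, which is exactly why this ratio appears. The strategies are well-defined by fixing enumerations of $D$ and of the dense rational radius set and picking the first qualifying element at each step.
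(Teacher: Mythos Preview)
Your approach is correct and is essentially the same as the paper's: in part (1) both arguments run the real $(\alpha,\beta)$ game and the rational $(\alpha',\beta')$ game in parallel, share player~II's centers, and replace each of player~I's centers by a nearby point of $D$, using $\alpha\beta=\alpha'\beta'$ to keep the even-indexed radii in fixed ratio so that the two runs have the same limit point. The only difference is bookkeeping: the paper chooses $\rho'$ in the wider window $\bigl(\tfrac{\alpha(1-\beta)}{\alpha'(1-\beta')}\rho_0,\ \tfrac{1-\alpha}{1-\alpha'}\rho_0\bigr)$ and takes $x_{2k}'\in D\cap B(x_{2k},\epsilon_k)$ with $\epsilon_k$ the minimum of the two slacks needed for legality, whereas you take $\rho_0'\in[(\alpha/\alpha')\rho_0,\rho_0)$ and impose the single nesting condition $d(x_{2k},x_{2k}')\le\rho_{2k}-\rho_{2k}'$. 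Your interval sits inside the paper's, and under your constraint $\rho_0'\ge(\alpha/\alpha')\rho_0$ a short calculation (which you should display explicitly in the write-up) shows that the nesting bound simultaneously forces both legality inequalities, so your invariant is a clean repackaging of the paper's two-term $\epsilon_k$. Your treatment of part (2), letting the two games share player~I's moves verbatim and approximating II's centers within $(\alpha'-\alpha)\rho_{2k}$, is exactly the ``similar'' argument the paper alludes to.
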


\begin{proof}
We will prove the first statement, the proof of the second is similar.
Fix the target set $T\subseteq X$.  Let $\tau$ be a winning strategy for II in the
rational Schmidt's $(\alpha', \beta')$ game.  We will construct a
strategy for II in Schmidt's $(\alpha, \beta)$ game by using
$\tau$.

Suppose I plays $(x_0, \rho_0)$ as his first move in the $(\alpha,\beta)$ game. Let $\rho=\rho_0$
to conserve notation. Let
$\rho' \in \left( \bigcup_{n, m \in \mathbb{N}}
\alpha^n\beta^m\mathbb{Q}_{>0}\right)$ with

\begin{equation}
\label{rhoprime}
\rho
\frac{\alpha}{\alpha'}\frac{1-\beta}{1-\beta'}<\rho'< \rho\frac{1-\alpha}{1-\alpha'}
\end{equation}
This is possible since
$\frac{\alpha}{\alpha'}\frac{1-\beta}{1-\beta'}<1$ and
$\frac{1-\alpha}{1-\alpha'}>1$ and $\bigcup_{n, m \in
  \mathbb{N}} \alpha^n\beta^m\mathbb{Q}_{>0}$ is dense in $\mathbb{R}^{>0}$.

Let $\epsilon_n\df\min\left\{(\alpha\beta)^n(\rho(1-\alpha)-\rho'(1-\alpha')),
(\alpha\beta)^{n-1}(\alpha'\rho'(1-\beta')-\alpha\rho(1-\beta))\right\}$.

Notice that $\epsilon_n >0$ by inequality (\ref{rhoprime}).  Now let
$(x_1', \alpha'\rho')=\tau(x_0', \rho')$ where $x_0' \in D \cap B(x_0, \epsilon_0)$.
Let $x_1=x_1'$.  By the
definition of $\epsilon_0$ and (\ref{rhoprime}), $B(x_1, \alpha\rho)\subseteq B(x_0,
\rho)$, thus $(x_1, \alpha\rho)$ is a valid response to $(x_0, \rho)$
in Schmidt's $(\alpha, \beta)$ game.

Now given a partial play with centers $\left\{x_k : k \leq
2n\right\}$, continue by induction to generate $x_{2n+1}$ by
considering $(x_{2n+1}', (\alpha'\beta')^n \alpha'\rho')=
\tau\left( \left\{(x_k', r_k) \colon k \leq
2n\right\}\right)$ where
for each $1\leq k\leq n$, $x_{2k-1}'$ is given by $\tau$ and
$x_{2k}'\in D\cap B(x_{2k}, \epsilon_k)$.  Again by
the definition of $\epsilon_n$ and (\ref{rhoprime}), $B(x_{2n+1},
(\alpha\beta)^n \alpha\rho) \subseteq  B(x_{2n}, (\alpha\beta)^n\rho)$.

We have defined a strategy for II in Schmidt's $(\alpha, \beta)$ game
which has the property that if a run is compatible with this
strategy with centers $\left\{x_k \colon k \in \omega\right\}$ then there
is a corresponding run compatible with $\tau$ with centers
$\left\{x_k' \colon k \in \omega\right\}$ such that for all $k$,
$x_{2k+1}=x_{2k+1}'$, so that $ \lim_{n \rightarrow
  \infty} x_n' = \lim_{n \rightarrow \infty} x_n$ and so since
$\tau$ is a winning strategy in the rational Schmidt's $(\alpha',
\beta')$ game, $\lim_{n \rightarrow \infty} x_n \in
T$.  So the strategy we have constructed is winning in Schmidt's
$(\alpha, \beta)$ game.
\end{proof}

As a consequence we have the following theorem.

\begin{theorem}[$\ad$]
\label{hyperbolathm}  Let $(X, d)$ be a Polish space. Let $T \subseteq
X$. Let $p \in (0, 1)$, then there is at most one point $(\alpha,
\beta) \in (0, 1)^2$ with $\alpha\beta=p$ at which Schmidt's $(\alpha,
\beta)$ game for $T$ is not determined.
\end{theorem}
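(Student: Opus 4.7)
The plan is to argue by contradiction using Theorem~\ref{lemma1} to transfer strategies between three games: the two hypothetically undetermined real games at the endpoints and an intermediate rational game sandwiched between them. Suppose there exist two distinct points $(\alpha_1,\beta_1)$ and $(\alpha_2,\beta_2)$ on the hyperbola $\alpha\beta=p$ at which Schmidt's game for $T$ is not determined. Without loss of generality $\alpha_1<\alpha_2$, which forces $\beta_2<\beta_1$ since $\alpha_i\beta_i=p$.

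I would then choose an interpolating pair $(\alpha_0,\beta_0)$ with $\alpha_1<\alpha_0<\alpha_2$ and $\beta_0=p/\alpha_0$, which automatically gives $\beta_2<\beta_0<\beta_1$ and $\alpha_0\beta_0=p$. The point of this intermediate pair is that the rational Schmidt's $(\alpha_0,\beta_0)$ game for $T$ is essentially an integer game: both players' allowable moves lie in the countable set $D\times\bigl(\bigcup_{n,m\in\N}\alpha_0^n\beta_0^m\Q_{>0}\bigr)$, so under $\ad$ this game is determined. I split on which player wins.

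If II has a winning strategy in the rational $(\alpha_0,\beta_0)$ game, I would apply Theorem~\ref{lemma1}(1) with $(\alpha,\beta)=(\alpha_1,\beta_1)$ and $(\alpha',\beta')=(\alpha_0,\beta_0)$; the hypotheses $\alpha_1<\alpha_0$, $\beta_0<\beta_1$, and $\alpha_1\beta_1=\alpha_0\beta_0$ are exactly what has been arranged, and the conclusion is that II wins the real Schmidt's $(\alpha_1,\beta_1)$ game, contradicting our assumption. If instead I has a winning strategy in the rational $(\alpha_0,\beta_0)$ game, I would apply Theorem~\ref{lemma1}(2) with $(\alpha,\beta)=(\alpha_0,\beta_0)$ and $(\alpha',\beta')=(\alpha_2,\beta_2)$; the required inequalities $\alpha_0<\alpha_2$ and $\beta_2<\beta_0$ hold by construction, yielding that I wins the real Schmidt's $(\alpha_2,\beta_2)$ game, again a contradiction.

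Given Theorem~\ref{lemma1}, there is no real obstacle; the entire content is organizing the interpolation so that the two directions of transfer in Theorem~\ref{lemma1} line up with the two possible winners of the intermediate integer game. The only point that requires a moment of care is the verification that the rational $(\alpha_0,\beta_0)$ game genuinely reduces to an integer game, which follows because each player's move ranges over a countable product; after that, $\ad$ does the work.
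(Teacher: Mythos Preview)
Your proof is correct and uses the same ingredients as the paper: Theorem~\ref{lemma1} together with $\ad$ applied to the rational Schmidt game, which is an integer game. The only difference is cosmetic: you assume two undetermined points and sandwich a single intermediate rational game between them, while the paper fixes one undetermined point $(\alpha,\beta)$ and, for each other $(\gamma,\delta)$ on the hyperbola, interposes a rational game between them; as a byproduct the paper's version records which player wins on each side of $(\alpha,\beta)$ (I to the right, II to the left), but the underlying argument is the same.
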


\begin{proof}
Suppose that Schmidt's $(\alpha, \beta)$ game is not determined
with $\alpha\beta=p$. Let $\alpha_1 <\alpha <\alpha_2$ and $\beta_1 >
\beta > \beta_2$ with $\alpha_1 \beta_1 = \alpha \beta=\alpha_2
\beta_2$.  Note that by Theorem~\ref{lemma1} part (1), II cannot have
a winning strategy in the rational Schmidt's $(\alpha_2, \beta_2)$ game,
since II does not have a winning strategy in Schmidt's
$(\alpha, \beta)$ game by assumption.  This means that I must
have a winning strategy in the rational Schmidt's $(\alpha_2, \beta_2)$ game
for any such $(\alpha_2, \beta_2)$ (by $\ad$) and thus by
Theorem~\ref{lemma1} part (2), I wins Schmidt's $(\gamma, \delta)$ game
for any $(\gamma, \delta) \in (0, 1)^2$ with $\gamma\delta=p$ and
$\alpha<\gamma$.  By a symmetric argument, I has no winning strategy
in the rational Schmidt's $(\alpha_1, \beta_1)$ game, so II must
have a winning strategy in Schmidt's $(\gamma, \delta)$ game for
any $(\gamma, \delta) \in (0, 1)^2$ with $\gamma\delta=p$ and
$\gamma<\alpha$.
\end{proof}

We next consider the variation of Schmidt's game where we restrict the
players to making non-tangent moves. We consider a general Polish space $(X,d)$.

\begin{definition}
We say the ball $B(x_{n+1}, \rho_{n+1})$ is \emph{tangent} to the ball $B(x_n, \rho_n)$ if
$\rho_{n+1} + d(x_n, x_{n+1}) = \rho_n$.
\end{definition}

In the {\em non-tangent} Schmidt's $(\alpha,\beta,\rho)$ game with target set $T \subseteq X$,
a rule of the game is that each player must play a nested ball of the appropriate radius, as in Schmidt's
game, but that ball must not be tangent to the previous ball. Note that the non-tangent variation
of Schmidt's game is still an intersection game, and the rule set $R$ is still Borel.
We will show that the ``simple strategy'' condition of Theorem~\ref{detthm} is also
satisfied, and so the non-tangent Schmidt's game is determined from $\ad$. The proof of this theorem is 
similar to that of Theorem~\ref{thm:schmidtdet}.  It is clear that the rules of this game are positional, so it will suffice to check the other hypotheses of Theorem~\ref{detthm}.

\begin{theorem}[$\ad$]\label{thm:tangent}
Let $(X,d)$ be a Polish space, and let $\alpha,\beta \in (0,1)$, $\rho \in \R_{>0}$, and $T\subseteq X$,
the non-tangent $(\alpha,\beta,\rho)$ Schmidt's game with target set $T$ is determined. 
\end{theorem}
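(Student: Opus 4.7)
The plan is to apply Theorem~\ref{detthm} with the pointclass $\bG = \bP^1_1$. The rule set $R$ for the non-tangent $(\alpha,\beta,\rho)$ Schmidt game is closed and positional, and the intersection map $f\colon (x_i,\rho_i)_{i\in\omega}\mapsto$ the unique point of $\bigcap_i B(x_i,\rho_i)$ is continuous, so the intersection condition, positionality, and complexity hypotheses of Theorem~\ref{detthm} are immediate. The remaining (and only substantive) task is the simple strategy hypothesis: whenever one of the players has a winning strategy, they have a winning simple Borel strategy. The approach closely parallels the proof of Theorem~\ref{thm:schmidtdet}, except that the one-dimensional order trick available in $\R$ is replaced by the strict inequality supplied by non-tangency.

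By symmetry I treat only player II. Fix a winning strategy $\Sigma$ and focus on the first round; later rounds are handled identically, with $\dc$ used to make the countably many choices at each stage. For each potential first move $z$ by I, write $\Sigma$'s response as $(y(z),\alpha\rho)$. Non-tangency gives $d(z,y(z)) < (1-\alpha)\rho$, so
\[
\epsilon(z) \;:=\; (1-\alpha)\rho - d(z, y(z)) \;>\; 0,
\]
and the triangle inequality guarantees that $(y(z),\alpha\rho)$ is a legal non-tangent response to $(x,\rho)$ for every $x \in B(z,\epsilon(z))$. The open balls $\{B(z,\epsilon(z))\}_{z\in X}$ cover $X$; since $X$ is Polish and hence Lindel\"of, a countable subcover $\{B(z_n,\epsilon(z_n))\}_{n\in\omega}$ suffices. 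Setting $A_n = B(z_n,\epsilon(z_n))\setminus \bigcup_{m<n}B(z_m,\epsilon(z_m))$ and $y_n = y(z_n)$ gives a simple one-round Borel strategy that follows $R$ at the empty position. Iterating the same construction at each later history-position and packaging the choices using $\dc$ produces a full simple Borel strategy $\tau$ for II.

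To see that $\tau$ inherits winningness from $\Sigma$, note that any run $(x_0,y_1,x_2,y_3,\dots)$ of $\tau$ can be simulated by a run $(z_{n_0},y_1,z_{n_1},y_3,\dots)$ of $\Sigma$ in which I plays the representatives $z_{n_k}$ of the partition pieces containing the actual $x_{2k}$; the legality of each $z_{n_k}$ in the $\Sigma$-simulation follows from the fact that the rule constraint on I at round $2k$ depends only on the preceding II-center, which matches in both runs. The two runs share every II-center and hence have the same limit point, so $\tau$-runs land in $T$ whenever $\Sigma$-runs do. The only mild subtlety, and the feature that distinguishes the non-tangent game from the tangent one, is that non-tangency converts the legal-response set from a closed set into one with nonempty interior around $\Sigma$'s chosen response, which is precisely what permits the Lindel\"of covering; this is what breaks down on $\R^n$ for $n\ge 2$ in the tangent version and is fatal for $n\ge 3$ by Theorem~\ref{thm:r3}.
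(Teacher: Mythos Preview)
Your proof is correct and follows essentially the same approach as the paper: apply Theorem~\ref{detthm} by verifying the simple strategy hypothesis, using the slack $\epsilon(z)=(1-\alpha)\rho-d(z,y(z))>0$ furnished by non-tangency to build an open cover, extract a countable subcover via the Lindel\"of property of Polish spaces, and iterate with $\dc$. One small slip: the rule set $R$ for the \emph{non-tangent} game is given by strict inequalities $d(x_n,x_{n+1})<\rho_n-\rho_{n+1}$, so $R$ is open rather than closed; this is harmless since all you need is $R\in\bG$, but it is worth stating correctly. Your final paragraph's justification that the simulated $\Sigma$-run is legal is a bit compressed (the real reason $z_{n_k}$ is legal is that the cover at each round is taken over legal $z$'s, which is possible precisely because the legal set is open and hence Lindel\"of), but this matches the level of detail in the paper's own argument.
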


\begin{proof}
We will show that if I (or II) has a winning strategy in the non-tangent $(\alpha,\beta,\rho)$ Schmidt
game, then I (or II) has a simple Borel winning strategy (in the sense of
Definition~\ref{simple_one_round}), thus by Theorem~\ref{detthm}, the result follows.

Without loss of generality, say II has a winning strategy $\Sigma$ in the non-tangent
$(\alpha,\beta,\rho)$ Schmidt's game.  We will define a simple Borel strategy $\tau$ for II from $\Sigma$. 
Suppose I makes first move $B(x_0,\rho)$,
and $\Sigma$ responds with $B(x_1,\alpha\rho)$, which is not tangent to $B(x_0,\rho)$.
Let $\epsilon= \rho(1-\alpha)-d(x_0,x_1)>0$.  If $d(x'_0,x_0) <\epsilon$,
then if I plays $B(x'_0,\rho)$, then $B(x_1,\alpha\rho)$ is still a valid response for 
for II. In other words, for each $x_0$, there is an open ball $U$ of some radius, for which
any $x'_0 \in U$ has the property that the response by $\Sigma$ to $(x_0, \rho)$ is also a legal
response to $(x'_0, \rho)$.  Let $\mathcal{C}$ be the collection of all such open balls $U$.
Then $\mathcal{C}$ is an open cover of $X$, and since $X$ is Polish, it is Lindel\"of, and thus
$\mathcal{C}$ has a countable subcover $\mathcal{C}' = \setof{U_{z_n}}_{n \in \omega}$.  The first round of the
simple Borel strategy $\tau$ is given by $(A_n,y_n)$ where
$A_n =\{ (x_0,\rho)\colon x_0 \in U_{z_n}\setminus \bigcup_{m<n} U_{z_m} \}$ and
$y_n=\Sigma(z_n,\rho)$. Clearly $(A_n,y_n)$ is a simple one-round Borel strategy
which follows the rules $R$ of the non-tangent $(\alpha,\beta,\rho)$ Schmidt's game.
This defines the first round of $\tau$. 
Using $\dc$, we continue inductively to define each subsequent round
of $\tau$ in a similar manner.

To see that $\tau$ is a winning strategy for II, simply note that for any run
of $\tau$ following the rules there is a run of $\Sigma$ producing the same
point of intersection. 

\end{proof}

\section{Questions} \label{sec:questions}
In Theorem~\ref{thm:schmidtdet} we showed that $\ad$ suffices to the determinacy
of Schmidt's $(\alpha,\beta,\rho)$ game on $\R$. In Theorem~\ref{thm:r3}
we showed that $\adp$ does not suffice to prove the determinacy of
Schmidt's $(\alpha,\beta,\rho)$ game on $\R^n$ for $n \geq 3$. In view of these results
several natural questions arise.

First, for $n=2$ our arguments do not seem to resolve the question of the strength
of Schmidt game determinacy in either case of the $(\alpha,\beta,\rho)$ or the
$(\alpha,\beta)$ game. The proof of Theorem~\ref{thm:schmidtdet} does not immediately
apply as $\R^2$ does not have the ``Lindel\"{o}f-like'' property we used for $\R$.
On the other hand, the proof of Theorem~\ref{thm:r3} also does not seem to apply
as we don't seem to have enough freedom in $\R^2$ to code an arbitrary
instance of uniformization as we did in $\R^3$.
In fact, the method of proof of Theorem~\ref{thm:schmidtdet} of using ``simple strategies''
cannot show the determinacy of Schmidt games in $\R^2$ from $\ad$. This is because
while we cannot seem to code an arbitrary uniformization problem into the game,
we can code the characteristic function of an arbitrary set $A\subseteq \R$
in a way similar to the proof of Theorem~\ref{thm:r3}. We could then choose
a set $A$ not projective over the pointclass $\bG$ (as in the statement of
Theorem~\ref{detthm}). Then the ``simple strategy'' hypothesis of
Theorem~\ref{detthm} will fail for this instance of the game.
So we ask:

\begin{question} \label{qa}
Does $\ad$ suffice to get the determinacy of either the Schmidt's $(\alpha,\beta,\rho)$
or $(\alpha,\beta)$ games on $\R^2$?
\end{question}

Although the distinction between Schmidt's $(\alpha,\beta,\rho)$ game and
Schmidt's $(\alpha,\beta)$ game seem immaterial in practical applications,
our main theorems apply to the $(\alpha,\beta,\rho)$ games only. So we ask:

\begin{question} \label{qb}
Does $\ad$ suffice to prove the determinacy of Schmidt's $(\alpha,\beta)$
game on $\R^n$?
\end{question}

Also interesting is the converse question of whether the determinacy of
Schmidt's game (either variation) implies determinacy axioms. In
\cite{Freiling} it is shown that the determinacy of Banach games
(which are similar in spirit to Schmidt games) implies $\ad$.
Here we do not have a corresponding result for $\R^n$. We note though
that if $\alpha=\beta=\frac{1}{2}$ and $\rho=\frac{1}{2}$, then the
determinacy of Schmidt's $(\alpha, \beta, \rho)$ game on $X=\ww$ with the standard metric
$d(x,y)= \frac{1}{2^{n+1}}$ where $n$ is least so that $x(n)\neq y(n)$,
gives $\ad$. So we ask:

\begin{question} \label{qc}
Does the determinacy of Schmidt's $(\alpha,\beta,\rho)$ (or $(\alpha,\beta)$)
game on $\R^n$ imply $\ad$? If $n \geq 3$, does Schmidt determinacy
imply $\adr$?
\end{question}

A related line of questioning is to ask what hypotheses are needed to get the determincy of
Schmidt's game for restricted classes of target sets. For example, while the determinacy of
the Banach-Mazur game for $\bS^1_1$ (that is, analytic) target sets is a theorem of just $\zf$, the corresponding
situation for Schmidt's game is not clear.  so we ask:

\begin{question} \label{qd}
Does $\zf+\dc$ suffice to prove the determinacy of Schmidt's game in $\R^n$
for $\bS^1_1$ target sets?
\end{question}

In view of the results of this paper, it is possible that the answer to Question~\ref{qd}
depends on $n$.  We can extend the class of target sets from the analytic sets
to the more general class of Suslin, co-Suslin sets. So we ask:

\begin{question} \label{qe}
Does $\ad$ suffice to prove the determinacy of Schmidt's game in $\R^n$
for Suslin, co-Suslin target sets?
\end{question}

Again, it is possible the answer to Question~\ref{qe} depends on $n$.

Finally, it is reasonable to ask the same questions of this paper for other
real games which also have practical application to number theory and related
aread. Important examples include McMullen's ``strong'' and ``absolute'' variations of
Schmidt's game \cite{McMullen_absolute_winning}. These are also clearly intersection games,
so the question is whether the simple strategy hypothesis of
Theorem~\ref{detthm} applies.

\bibliographystyle{amsplain}

\bibliography{Schmidt}

\providecommand{\bysame}{\leavevmode\hbox to3em{\hrulefill}\thinspace}
\providecommand{\MR}{\relax\ifhmode\unskip\space\fi MR }
\providecommand{\MRhref}[2]{%
  \href{http://www.ams.org/mathscinet-getitem?mr=#1}{#2}
}
\providecommand{\href}[2]{#2}
\begin{thebibliography}{10}

\bibitem{Akh}
Renat~K. Akhunzhanov, \emph{On the distribution modulo 1 of exponential
  sequences}, Mathematical Notes \textbf{76} (2004), no.~2, 153--160.

\bibitem{Becker}
Howard~S. Becker, \emph{Determinacy of {B}anach games}, The Journal of Symbolic
  Logic \textbf{50} (1985), no.~1, 110--122.

\bibitem{Freiling}
Chris~F. Freiling, \emph{Banach games}, The Journal of Symbolic Logic
  \textbf{49} (1984), no.~2, 343--375.

\bibitem{kechrishr}
Alexander~S. Kechris, \emph{A coding theorem for measures}, Cabal Seminar
  81--85, vol. 1333, Springer-Verlag, 1987, pp.~103--109.

\bibitem{Kechris}
\bysame, \emph{Classical descriptive set theory}, Graduate Texts in
  Mathematics, vol. 156, Springer-Verlag, New York, 1995.

\bibitem{Martin_determinacy}
Donald~A. Martin, \emph{A purely inductive proof of {B}orel determinacy},
  Recursion theory (Ithaca, N.Y., 1982), Proc. Sympos. Pure Math., vol.~42,
  Amer. Math. Soc., Providence, RI, 1985, pp.~303--308.

\bibitem{Martin_ctb}
\bysame, \emph{Countable length games}, Large Cardinals, determinacy, and other
  topics, the Cabal Seminar Volume IV (Alexander~S. Kechris, Benedikt L\"{o}we,
  and John~R. Steel, eds.), Cambridge University Press, to appear.

\bibitem{Steel}
Donald~A. Martin and John~R. Steel, \emph{The extent of scales in
  {$L(\mathbb{R})$}}, Cabal Seminar 79--81, vol. 1019, Springer-Verlag, 1983,
  pp.~86--96.

\bibitem{MartinWoodin}
Donald~A. Martin and Hugh~W. Woodin, \emph{Weakly homogeneous trees}, Games,
  Scales, and Suslin Cardinals, the Cabal Seminar Volume I (Alexander~S.
  Kechris, Benedikt L\"{o}we, and John~R. Steel, eds.), Lecture Notes in Logic,
  vol.~31, Cambridge University Press, 2008, pp.~421--438.

\bibitem{McMullen_absolute_winning}
Curtis~T. McMullen, \emph{Winning sets, quasiconformal maps and {D}iophantine
  approximation}, Geom. Funct. Anal. \textbf{20} (2010), no. 3, 726--740.

\bibitem{Moschovakis_od}
Yiannis~N. Moschovakis, \emph{Ordinal games and playful models}, Cabal Seminar
  77--79, vol. 839, Springer-Verlag, 1981, pp.~169--201.

\bibitem{Moschovakis}
\bysame, \emph{Descriptive set theory}, Mathematical Surveys and Monographs,
  American Mathematical Society; 2nd edition, 2009.

\bibitem{Schmidt1}
Wolfgang~M. Schmidt, \emph{On badly approximable numbers and certain games},
  Trans. Amer. Math. Soc. \textbf{123} (1966), 27--50.

\bibitem{solovay}
Robert~M. Solovay, \emph{The independence of {DC} from {AD}}, Cabal Seminar
  76--77, vol. 689, Springer-Verlag, 1978, pp.~171--183.

\end{thebibliography}

\end{document}